\newtheorem{theorem}{Theorem}[section]
\newtheorem{lemma}[theorem]{Lemma}
\newtheorem{corollary}[theorem]{Corollary}
\newtheorem{proposition}[theorem]{Proposition}
\theoremstyle{definition}
\newtheorem*{definition*}{Definition}
\theoremstyle{remark}
\newtheorem{remark}[theorem]{Remark}
\newtheorem{example}[theorem]{Example}
\numberwithin{equation}{section}
\newcommand {\N}{\mathbb{N}} 
\newcommand {\Z}{\mathbb{Z}} 
\newcommand {\R}{\mathbb{R}} 
\newcommand{\FF}{\mathcal{F}}
\newcommand{\TT}{\mathcal{T}}
\DeclareMathOperator{\ent}{ent}
\DeclareMathOperator{\inte}{Int}
\DeclareMathOperator{\adhe}{Adh}
\begin{document}
\title[The Myhill property for cellular automata]{The Myhill property for cellular automata on amenable semigroups}

\author[T.Ceccherini-Silberstein]{Tullio Ceccherini-Silberstein}
\address{Dipartimento di Ingegneria, Universit\`a del Sannio, C.so
Garibaldi 107, 82100 Benevento, Italy}
\email{tceccher@mat.uniroma3.it}
\author[M.Coornaert]{Michel Coornaert}
\address{Institut de Recherche Math\'ematique Avanc\'ee \\
UMR 7501, Universit\'e de Strasbourg et CNRS \\
7 rue Ren\'e-Descartes \\
67000 Strasbourg, France  }
\email{coornaert@math.unistra.fr}
\dedicatory{Dedicated to Slava Grigorchuk on his $60$th birthday}
\subjclass[2000]{43A07, 37B15, 68Q80}
\keywords{Cellular automaton, semigroup, pre-injectivity, Garden of Eden theorem, amenable semigroup, F\o lner net, entropy}
\date{\today}
\begin{abstract}
Let $S$ be a cancellative left-amenable semigroup and let $A$ be a finite set.
We prove that every pre-injective cellular automaton  $\tau \colon A^S \to A^S$ is surjective.
\end{abstract}
\maketitle
\section{Introduction}
\label{s:introduction}
Let $S$ be a semigroup, i.e., a set equipped with an associative binary operation.
\par
Given $s \in S$, we denote by $L_s$ and $R_s$ the left and right multiplication by $s$, that is,
the maps $L_s \colon S \to S$ and $R_s \colon S \to S$ defined by $L_s(t) = st$ and $R_s(t) = ts$ for all $t \in S$.
\par 
Let $A$ be a set,  called the \emph{alphabet}. 
 The set $A^S$, consisting of all maps $x \colon S \to A$, is called the set of 
\emph{configurations}.
Given an element $s \in S$ and a configuration $x \in A^S$, we define the configuration 
$sx \in A^S$ by $sx := x \circ R_s$. Thus, we have $sx(t) = x(ts)$ for all $ t \in S$.
The map $(s,x) \mapsto sx$ defines a left action of the semigroup $S$ on $A^S$,
that is, it satisfies $s_1(s_2x) = (s_1s_2) x$ for all $s_1,s_2 \in S$ and $x \in A^S$.
  This action is called the (left) $S$-\emph{shift} on $A^S$.
\par
We say that a map $\tau \colon A^S \to A^S$ is a \emph{cellular automaton} over the semigroup $S$ and the alphabet $A$ if   there exist a finite subset $M \subset S$ 
and a map $\mu \colon  A^M \to A$ such that 
\begin{equation} 
\label{e:local-property}
\tau(x)(s) = \mu( (s x)\vert_M)  \quad  \text{for all } x \in A^S\text{ and } s \in S,
\end{equation}
where $(sx)\vert_M \in A^M$ is the restriction of the configuration $sx = x \circ R_s$ to $M$.
Such a set $M$ is called a \emph{memory set} for $\tau$ and one says that $\mu$ is a \emph{local defining map} for $\tau$ relative to $M$. 
\par
Two configurations $x_1,x_2 \in A^S$ are said to be \emph{almost equal} if they coincide outside a finite subset of $S$.
A cellular automaton $\tau \colon A^S \to A^S$ is called \emph{pre-injective} if $\tau(x_1) = \tau(x_2)$ implies $x_1 = x_2$ whenever $x_1,x_2 \in A^S$ are almost equal.
\par
Let $\ell^\infty(S)$ denote the vector space consisting of all bounded real-valued maps 
$f \colon S \to \R$.
A \emph{mean} on $S$ is an $\R$-linear map $m \colon \ell^\infty(S) \to \R$ such that 
$\inf_{s \in S} f(s) \leq m(f) \leq \sup_{s \in S} f(s)$ for all $f \in \ell^\infty(S)$.
One says that a mean $m$ on $S$ is   \emph{left-invariant} (resp.~\emph{right-invariant}) if it satisfies
$m(f \circ L_s) = m(f)$ (resp.~$m(f \circ R_s) = m(f)$) for all
$f \in \ell^\infty(S)$ and $s \in S$.
The semigroup $S$ is called
\emph{left-amenable} (resp.~\emph{right-amenable}) if it admits a left (resp.~right) invariant mean.
One says that $S$ is \emph{amenable} if it is both left and right-amenable. 
 All commutative semigroups, all finite groups, all solvable groups, and all finitely generated groups of subexponential growth are amenable.
 For groups, it turns out that left-amenability is equivalent to right-amenability.
 Also, every subgroup of an amenable group is itself amenable.
 As non-abelian free groups are non-amenable, it follows that every group that contains a non-abelian free subgroup is non-amenable. 
On the other hand, there are semigroups that are left-amenable but not right-amenable.
There are finite semigroups that are neither left-amenable nor right-amenable and 
amenable groups containing subsemigroups that are neither left-amenable nor right-amenable.
For more on amenable groups and semigroups, see for example
 \cite{day-amenable-semigroups}, \cite{greenleaf},  \cite{namioka}, \cite{paterson}.
 \par
Fifty years ago, Moore and Myhill proved the Garden of Eden theorem for cellular automata over $\Z^2$.This theorem  states that if $A$ is a finite set and $G = \Z^2$, then 
a cellular automaton  
$\tau \colon A^G \to A^G$ is surjective if and only if it is pre-injective. 
In fact, Moore \cite{moore} first proved that surjectivity implies pre-injectivity and, shortly after, Myhill \cite{myhill} proved the converse implication.
  The Garden of Eden theorem  was subsequently extended to 
all finitely generated groups of subexponential growth in \cite{machi-mignosi} and to all amenable groups in \cite{ceccherini}.
Actually it follows from a result in   \cite{bartholdi} that the class of amenable groups is the larger class of groups for which the Moore implication holds true.
It is unknown whether the Myhill implication characterizes group amenability, i.e.,
if every non-amenable group admits a  pre-injective but not surjective cellular automaton with finite alphabet  (this is known to be true for groups containing non-abelian free subgroups). 
\par
The Garden of Eden theorem does not extend to all amenable semigroups.
For example, the additive monoid $\N$ of non-negative integers is amenable since it is commutative.
However, the shift map $\tau \colon \{0,1\}^\N \to \{0,1\}^\N$,
defined by $\tau(x)(n) = x(n + 1)$ for all $x \in \{0,1\}^\N$ and $n \in \N$,
yields an example of  a cellular automaton with finite alphabet over   $\N$ 
that is surjective but not  pre-injective (see Example \ref{ex:surjective-not-preinj} below for a generalization).  
Thus, the Moore implication is not true for the monoid $\N$.  
   \par
Recall that an element $s$ in a semigroup   $S$  is called \emph{left-cancellable} (resp. \emph{right-cancellable}) 
if the map $L_s$ (resp. $R_s$) is injective.
One says that $s$ is cancellable if it is both left-cancellable and right-cancellable.
The semigroup $S$ is called
\emph{left-cancellative} (resp. \emph{right-cancellative}, resp. \emph{cancellative}) if every element in $S$ is left-cancellable  (resp.  right-cancellable, resp. cancellable).
\par
The main result in the present paper  is that the Myhill implication remains valid for all cancellative left-amenable semigroups.
More precisely, we shall establish the following:

\begin{theorem} 
\label{t:myhill-semigroup}
Let $S$ be a cancellative left-amenable semigroup and let $A$ be a finite set.
Then every pre-injective cellular automaton $\tau \colon A^S \to A^S$ is surjective.
\end{theorem}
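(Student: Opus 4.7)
The plan is to adapt the entropy-based proof of the Myhill implication from the amenable group setting of \cite{ceccherini} to cancellative left-amenable semigroups. The extra ingredient beyond the group case is a usable Følner net, which I obtain as follows: on a cancellative left-amenable semigroup, the standard weak-$*$ argument converts a left-invariant mean into a net $(F_n)$ of non-empty finite subsets satisfying the left Følner condition $|sF_n \triangle F_n|/|F_n| \to 0$ for every $s \in S$ (the left-cancellability of $s$ is needed to identify $|L_s^{-1}(F_n)|$ with $|sF_n|$). Along such a net, define the entropy of $X \subset A^S$ by $\ent(X) := \limsup_n \log |\pi_{F_n}(X)|/|F_n|$, where $\pi_{F_n} \colon A^S \to A^{F_n}$ is the restriction. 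Clearly $\ent(A^S) = \log |A|$, and the argument concludes by combining two estimates: pre-injectivity will force $\ent(\tau(A^S)) = \log |A|$, while non-surjectivity will force $\ent(\tau(A^S)) < \log |A|$.

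For the pre-injectivity estimate, let $M$ be a memory set for $\tau$ and define the $M$-interior $F^{-M} := F \setminus M(S \setminus F)$. Left-cancellability of each $m \in M$ gives $F \cap M(S \setminus F) \subset \bigcup_{m \in M} (F \setminus mF)$, and the left Følner condition then yields $|F_n^{-M}|/|F_n| \to 1$. Fix a reference configuration $x_0 \in A^S$, and for each $z \in A^{F_n^{-M}}$ let $\hat z \in A^S$ be its extension by $x_0$ off $F_n^{-M}$. If $z_1 \neq z_2$, the configurations $\hat z_1 \neq \hat z_2$ are almost equal, so pre-injectivity gives $\tau(\hat z_1) \neq \tau(\hat z_2)$; any position $s$ where they differ must satisfy $Ms \cap F_n^{-M} \neq \emptyset$, and by the very definition of $F_n^{-M}$ this forces $s \in F_n$. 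Thus $z \mapsto \tau(\hat z)|_{F_n}$ is injective, giving $|\pi_{F_n}(\tau(A^S))| \geq |A|^{|F_n^{-M}|}$, whence $\ent(\tau(A^S)) \geq \log |A|$.

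For the non-surjectivity estimate, if $\tau$ is not surjective then, since $\tau(A^S)$ is closed in the prodiscrete topology, it misses some cylinder; so there is a finite $P \subset S$ and a pattern $p \in A^P$ with $\tau(x)|_P \neq p$ for every $x \in A^S$. The equivariance identity $\tau(sx)(t) = \tau(x)(ts)$, combined with right-cancellability of $s$ (so that $R_s \colon P \to Ps$ is a bijection), shows that the translated pattern $p \circ R_s^{-1} \in A^{Ps}$ is also forbidden at every $s \in S$. Inside $F_n$ consider $G_n := \{s \in F_n : Ps \subset F_n\}$; its complement in $F_n$ is contained in $\bigcup_{f \in P}(F_n \setminus L_f^{-1}(F_n))$, whose cardinality is at most $\sum_{f \in P} |fF_n \triangle F_n| = o(|F_n|)$, so $|G_n|/|F_n| \to 1$. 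A greedy selection in $G_n$ then extracts $s_1, \ldots, s_{k_n}$ with the translates $Ps_i$ pairwise disjoint and $k_n \geq |G_n|/|P|^2$, since each choice $s_i$ eliminates only the $s$ with $Ps \cap Ps_i \neq \emptyset$, which by left-cancellability lie in $\bigcup_{f \in P} L_f^{-1}(Ps_i)$, a set of size at most $|P|^2$. As on each $Ps_i$ the image $\pi_{Ps_i}(\tau(A^S))$ misses at least one of the $|A|^{|P|}$ patterns,
\[
|\pi_{F_n}(\tau(A^S))| \leq (|A|^{|P|}-1)^{k_n}\,|A|^{|F_n|-k_n|P|},
\]
so $\ent(\tau(A^S)) \leq \log|A| - |P|^{-2}\log(|A|^{|P|}/(|A|^{|P|}-1)) < \log|A|$, contradicting the previous estimate.

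The principal obstacle is the non-group nature of $S$: the shift acts by right multiplications, cellular automata read memory on right translates $Ms$, and left-amenability only controls left multiplications, so without inverses the usual group arguments must be reworked. Cancellativity is invoked at essentially every step to turn cardinality identities involving $L_s^{-1}$ or $R_s^{-1}$ (which, strictly speaking, are relations rather than maps) into the clean form one would have from genuine inverses, and in particular to ensure that translates $Ps$ of forbidden patterns retain the full size $|P|$. The packing lemma is the most delicate step: without inverses one cannot rewrite the overlap condition $Ps \cap Ps' \neq \emptyset$ in the tidy group form $s^{-1}s' \in P^{-1}P$, so the overlap must be bounded explicitly through the $L_f^{-1}$-preimages as above.
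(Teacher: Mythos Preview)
Your proof is correct and follows the same entropy-based architecture as the paper: pre-injectivity forces $\ent_\FF(\tau(A^S)) = \log|A|$, while non-surjectivity forces $\ent_\FF(\tau(A^S)) < \log|A|$. For the first implication your set $F_n^{-M} = F_n \setminus M(S\setminus F_n)$ is chosen so that $\adhe_M(F_n^{-M}) \subset F_n$, which is exactly the dual of the paper's Lemma~\ref{l:preinj-implique-ent-max} (where configurations are supported on $F_j$ and images are compared on $\adhe_M(F_j)$); the counting is the same. The one noteworthy difference is in the packing step for the second implication: the paper constructs a global $P$-tiling of $S$ via Zorn's lemma (Proposition~\ref{p:tilings-exist}) and then shows in Proposition~\ref{p:maj--Nj-su-Fj} that a positive proportion of the tiles lie inside each $F_j$, whereas you run a greedy selection directly inside each F\o lner set, bounding the overlap of each chosen translate by $|\bigcup_{f\in P} L_f^{-1}(Ps_i)| \leq |P|^2$ via left-cancellability. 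Your local argument is more elementary and avoids the axiom of choice while giving the same bound $k_n \geq |G_n|/|P|^2$; the paper's global tiling, by contrast, separates the combinatorics of the packing from the entropy computation and makes the role of $S$-invariance (Corollary~\ref{c:entropy-invariant-not-max}) explicit.
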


As injectivity implies pre-injectivity, an immediate consequence of Theorem \ref{t:myhill-semigroup} is the following result.

\begin{corollary} 
\label{c:myhill-semigroup}
Let $S$ be a cancellative left-amenable semigroup and let $A$ be a finite set.
Then every injective cellular automaton $\tau \colon A^S \to A^S$ is surjective.
\qed
\end{corollary}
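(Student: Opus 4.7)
The plan is to adapt the entropy-based proof of the Garden of Eden theorem for amenable groups to the one-sided setting of a cancellative left-amenable semigroup. Left-amenability combined with cancellativity yields a left-F\o lner net $(F_i)_{i \in I}$ of finite non-empty subsets of $S$ such that, for every finite $K \subseteq S$, both the interior $F_i^{-K} := \{s \in S : Ks \subseteq F_i\}$ and the thickening $F_i^{+K} := \{s \in S : Ks \cap F_i \neq \emptyset\}$ satisfy $|F_i^{-K}|/|F_i| \to 1$ and $|F_i^{+K}|/|F_i| \to 1$, with $|Ks| = |K|$ for every $s$. I would then define the entropy of $X \subseteq A^S$ by $\ent(X) := \limsup_i \log|X_{F_i}|/|F_i|$, where $X_F := \{x|_F : x \in X\}$; trivially $\ent(X) \leq \log|A|$. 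Arguing by contradiction, I would show that a pre-injective non-surjective $\tau$ would force $\ent(\tau(A^S))$ to be simultaneously equal to $\log|A|$ and strictly less than $\log|A|$.

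For the lower bound, fix a base configuration $x_0$ and let $X_F := \{x \in A^S : x \equiv x_0 \text{ on } S \setminus F\}$, so that $|X_F| = |A|^{|F|}$. Pre-injectivity makes $\tau|_{X_F}$ injective, and since $\tau(x)(s)$ depends only on $x|_{Ms}$ (with $M$ a memory set for $\tau$), every element of $\tau(X_F)$ agrees with $\tau(x_0)$ on $S \setminus F^{+M}$. Hence $|(\tau(A^S))_{F_i^{+M}}| \geq |A|^{|F_i|}$, and the F\o lner condition $|F_i^{+M}|/|F_i| \to 1$, together with the fact that the entropy of a shift-invariant subset does not depend on the choice of F\o lner net, gives $\ent(\tau(A^S)) \geq \log|A|$.

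For the strict upper bound, compactness of $A^S$ and continuity of $\tau$ yield a finite $D \subseteq S$ and a pattern $p \in A^D$ that is not the restriction to $D$ of any element of $\tau(A^S)$. Since $\tau(A^S)$ is shift-invariant and, by right-cancellativity, $|Ds| = |D|$ and the translated pattern $p_s \in A^{Ds}$ defined by $p_s(ds) := p(d)$ is well-defined, one has $y|_{Ds} \neq p_s$ for every $s \in S$ and every $y \in \tau(A^S)$. I would then greedily pick a maximal $T_i \subseteq F_i^{-D}$ with the family $\{Ds : s \in T_i\}$ pairwise disjoint. Since $Ds \cap Dt \neq \emptyset$ forces $d_1 s = d_2 t$ for some $d_1, d_2 \in D$, left-cancellativity determines $s$ uniquely from $(d_1, d_2, t)$, so each $t \in T_i$ blocks at most $|D|^2$ candidates in $F_i^{-D}$; combined with $|F_i^{-D}|/|F_i| \to 1$, maximality forces $|T_i|/|F_i| \geq \alpha$ for some fixed $\alpha > 0$ and all large $i$. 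Since the disjoint windows constrain the image independently,
\[
|(\tau(A^S))_{F_i}| \leq (|A|^{|D|} - 1)^{|T_i|} \cdot |A|^{|F_i| - |D|\,|T_i|},
\]
whence $\ent(\tau(A^S)) \leq \log|A| - \alpha \log\bigl(|A|^{|D|}/(|A|^{|D|} - 1)\bigr) < \log|A|$, contradicting the preceding paragraph.

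The main obstacle is the genuine asymmetry of the semigroup setting: on a group one could quasi-tile $F_i$ by translates of $D$ from either side, but here the only valid translates of the forbidden window are the right translates $Ds$, and only left-F\o lner nets are available. Consequently both cancellation laws must be used in tandem, right-cancellativity to give meaning to $p_s$ and to the uniform size $|Ds| = |D|$, and left-cancellativity to control the ``neighbour multiplicity'' through the uniqueness of solutions to $d_1 s = d_2 t$. Verifying that Namioka-type left-F\o lner nets satisfy the two-sided interior condition $|F_i^{-D}|/|F_i| \to 1$ needed for effective packing, and establishing the F\o lner-independence of the entropy in this one-sided context, are the delicate technical steps; once they are in place the entropy inequalities proceed essentially as in the group case.
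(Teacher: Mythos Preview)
Your proposal is correct, and in fact it proves more than is needed: the paper derives this corollary in one line from Theorem~\ref{t:myhill-semigroup} (pre-injective $\Rightarrow$ surjective), simply because injectivity trivially implies pre-injectivity. What you have sketched is essentially a direct proof of Theorem~\ref{t:myhill-semigroup} itself, via the same entropy strategy the paper uses.

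Within that strategy there are two mild differences worth noting. First, for the lower bound $\ent(\tau(A^S)) \geq \log|A|$ you pass from the net $(F_i)$ to the thickened net $(F_i^{+M})$ and then invoke F\o lner--independence of entropy for shift-invariant sets (an Ornstein--Weiss type statement). The paper deliberately avoids this external input: its Lemma~\ref{l:preinj-implique-ent-max} bounds $|\pi_{\adhe_M(F_j)}(Y)|$ directly in terms of $|\pi_{F_j}(Y)|$ using $\adhe_M(F_j) = \inte_M(F_j) \cup \partial_M^*(F_j)$ and $\inte_M(F_j) \subseteq F_j$, so the whole argument runs with a single fixed F\o lner net. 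Second, for the packing step you build a greedy maximal disjoint family of windows $Ds$ inside each $F_i$ separately, while the paper uses Zorn's lemma once to produce a global $K$-tiling $T$ of $S$ (Proposition~\ref{p:tilings-exist}) and then sets $T_j = \{t \in T : Kt \subseteq F_j\}$; the resulting density bound $|T_j| \geq \delta|F_j|$ (Proposition~\ref{p:maj--Nj-su-Fj}) is the same inequality you obtain. Both packing arguments use left-cancellativity in the same way (to bound the number of $s$ with $Ds \cap Dt \neq \varnothing$), and both use right-cancellativity exactly where you do, to transport the forbidden pattern to $Ds$.
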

Let us say that a semigroup $S$ is \emph{surjunctive} if every injective cellular automaton with finite alphabet over $S$ is surjective.
Then Corollary \ref{c:myhill-semigroup} may be rephrased by saying that every cancellative left-amenable semigroup is surjunctive.
In the final section, we will see that the bicyclic monoid is not surjunctive.
As the bicyclic monoid is amenable,  
this shows that the ipothesis of non-cancellativity cannot be removed either from Theorem \ref{t:myhill-semigroup}
or even from Corollary \ref{c:myhill-semigroup}.  
In contrast, the question whether every group is surjunctive, which
  is known as the
\emph{Gottschalk conjecture} \cite{gottschalk},
 remains open.  
However,
 it is known to be   true  for sofic groups \cite{gromov-esav}, \cite{weiss-sgds}
and the class of sofic groups is very large. It includes in particular all residually amenable groups and   
 no examples of non-sofic groups have been found up to now.
\par
The paper is organized as follows.
Sections \ref{s:interiors-boundaries}, \ref{s:folner-nets}, \ref{s:ca}, and \ref{s:tilings}
contain preliminary material on boundaries, F\o lner nets,  cellular automata, and tilings in semigroups. 
 In Section \ref{s:entropy}, given a left-cancellative semigroup $S$ and a finite set $A$, we define the entropy  of a subset of the configuration space $A^S$
with respect to a F\o lner net.  
This entropy is always bounded above by the logarithm of the cardinality of the alphabet $A$. 
Moreover, for closed invariant subsets $X \subset A^S$ and $S$ cancellative,  equality holds if and only if $X = A^S$.
Theorem \ref{t:myhill-semigroup} is established in Section \ref{s:proof}
by showing that the image of  a pre-injective cellular automaton
is a closed invariant subset of the configuration space with maximal entropy.
In  Section \ref{sec:examples},
we describe examples of cellular automata with finite alphabe over cancellative amenable semigroups that are surjective but not pre-injective. These examples generalize the shift map on $\{0,1\}^\N$ mentioned above.
  Finally, we show that the bicyclic monoid is not surjunctive.

\section{Boundaries}
\label{s:interiors-boundaries}
Let $S$ be a semigroup. 
Given two non-empty subsets  $K$ and $\Omega$  of $S$, 
we define the \emph{$K$-interior} $\inte_K(\Omega)$
 and the \emph{$K$-adherence} $\adhe_K(\Omega)$ 
of $\Omega$ by
\begin{align*}
\inte_K(\Omega) &:= \{s\in \Omega : Ks\subset \Omega \}, \\ 
\adhe_K(\Omega) &:= \{s\in S : Ks \cap\Omega \neq \varnothing\}.
\end{align*}
 Note that
$$
\inte_K(\Omega) \subset \Omega \cap \adhe_K(\Omega).
$$
We define the $K$-\emph{boundaries}
$\partial_K(\Omega)$ and $\partial^*_K(\Omega)$ 
of $\Omega$ by
$$
\partial_K(\Omega) :=\Omega\setminus \inte_K(\Omega)
\quad \text{and} \quad
\partial^*_K(\Omega)  := \adhe_K(\Omega) \setminus \inte_K(\Omega).
$$

  \begin{proposition}
Let $S$ be a semigroup.
Let $\Omega$ and $K$ be two non-empty subsets of $S$
and suppose that every element of $K$ is left-cancellable.
Then one has
\begin{equation}
\label{e:formula-for-boundary}
\partial_{K}(\Omega) =   \bigcup_{k\in K}{L_k}^{-1}(k\Omega \setminus \Omega),
\end{equation}
and
\begin{equation}
\label{e:formula-for-boundary-etoile}
\partial_{K}^*(\Omega) \subset \partial_{K}(\Omega) \coprod \left(\bigcup_{k\in K}{L_k}^{-1}(\Omega \setminus k\Omega)\right)
\end{equation}
(here $\coprod$ denotes disjoint union).
 \end{proposition}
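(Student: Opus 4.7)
The plan is to prove both statements by carefully unpacking the definitions of $\inte_K$, $\adhe_K$, $\partial_K$, and $\partial_K^*$, invoking left-cancellability at the key moments where one needs to move between the conditions ``$ks \in k\Omega$'' and ``$s \in \Omega$''.

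For the equality \eqref{e:formula-for-boundary}, I would argue by double inclusion. Unfolding the definition, $s \in \partial_K(\Omega)$ means $s \in \Omega$ and there exists $k \in K$ with $ks \notin \Omega$. Since $s \in \Omega$ we trivially have $ks \in k\Omega$, so $ks \in k\Omega \setminus \Omega$, giving $s \in L_k^{-1}(k\Omega \setminus \Omega)$. Conversely, if $s \in L_k^{-1}(k\Omega \setminus \Omega)$ for some $k \in K$, then $ks = kt$ for some $t \in \Omega$ and $ks \notin \Omega$; here left-cancellability of $k$ forces $s = t \in \Omega$, and the witness $k$ shows $Ks \not\subset \Omega$, so $s \in \partial_K(\Omega)$.

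For the inclusion \eqref{e:formula-for-boundary-etoile}, I would take $s \in \partial_K^*(\Omega)$, so there is some $k_0 \in K$ with $k_0 s \in \Omega$, and $s \notin \inte_K(\Omega)$. Split into two cases according to whether $s \in \Omega$ or not. If $s \in \Omega$, then immediately $s \in \Omega \setminus \inte_K(\Omega) = \partial_K(\Omega)$. If $s \notin \Omega$, then I claim $k_0 s \notin k_0 \Omega$: otherwise $k_0 s = k_0 t$ for some $t \in \Omega$, and left-cancellability of $k_0$ gives $s = t \in \Omega$, a contradiction. Hence $k_0 s \in \Omega \setminus k_0 \Omega$, i.e., $s \in L_{k_0}^{-1}(\Omega \setminus k_0 \Omega)$.

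Finally, to justify that the union on the right-hand side is actually disjoint (as the notation $\coprod$ demands), I would observe that $\partial_K(\Omega) \subset \Omega$, while any $s \in L_k^{-1}(\Omega \setminus k\Omega)$ must satisfy $s \notin \Omega$: for if $s \in \Omega$ then $ks \in k\Omega$, contradicting $ks \in \Omega \setminus k\Omega$. Both sides of the union therefore live in disjoint parts of $S$, and the statement is proved. There is no real obstacle here; the only thing to be careful about is invoking left-cancellability precisely at the two places (once in each direction of \eqref{e:formula-for-boundary}, and once in Case 2 of \eqref{e:formula-for-boundary-etoile}) where one must infer membership in $\Omega$ from membership in $k\Omega$.
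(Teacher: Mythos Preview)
Your proof is correct and follows the same path as the paper's: unpack the definition of $\partial_K(\Omega)$ for \eqref{e:formula-for-boundary}, then split $\partial_K^*(\Omega)$ into the cases $s \in \Omega$ and $s \notin \Omega$ for \eqref{e:formula-for-boundary-etoile}. You even add an explicit verification of disjointness that the paper leaves tacit; one tiny quibble is that left-cancellability is needed only in the reverse inclusion of \eqref{e:formula-for-boundary} (the forward inclusion holds automatically), so your closing tally slightly overcounts.
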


\begin{proof}
By definition, an element $s \in S$ is in  $ \partial_K(\Omega)$ if and only if 
$s \in \Omega$
and there exists $k \in K$ such that $ks \notin \Omega$.
As $L_k$ is injective for each $k \in K$, this is equivalent to the existence of $k \in K$ such that 
$s \in L_k^{-1}(k\Omega \setminus\Omega)$. 
This shows \eqref{e:formula-for-boundary}.
\par
Let now  $s \in \partial_K^*(\Omega) = \adhe_K(\Omega) \setminus \inte_K(\Omega)$. 
If $s \in \Omega$, then we have $s \in \Omega \setminus \inte_K(\Omega) = \partial_K(\Omega)$.
Suppose now that $s \in S \setminus \Omega$.
As $s \in \adhe_K(\Omega)$,  there exists $k \in K$ such that $ks \in \Omega$.
We have $ks \notin k \Omega$ since $s \notin \Omega$ and $k$ is left-cancellable.
Thus $s \in L_k^{-1}(\Omega \setminus k\Omega)$. 
This shows \eqref{e:formula-for-boundary-etoile}. 
\end{proof}

In the sequel, we shall use  $|\cdot|$   to denote cardinality of finite sets.

\begin{corollary}
\label{c:boundary-properties-cardinalities}
Let $S$ be a semigroup. 
Suppose that $K$ and $\Omega$ are non-empty finite subsets of $S$ and that every element of $K$ is left-cancellable.
Then the sets $\partial_K(\Omega)$ and $\partial_K^*(\Omega)$ are finite. 
Moreover,  one has
\begin{equation}
\label{eq:boundary-2}
|\partial_K(\Omega)| \leq \sum_{k \in K}|k\Omega \setminus \Omega|
\quad \text{and} \quad
|\partial_K^*(\Omega)| \leq 2 \sum_{k \in K}|k\Omega \setminus \Omega|.
\end{equation} 
 \end{corollary}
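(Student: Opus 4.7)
The plan is to read off both inequalities directly from formulas \eqref{e:formula-for-boundary} and \eqref{e:formula-for-boundary-etoile} of the preceding proposition, the only extra input being that the hypothesis makes each $L_k$ (for $k \in K$) an injective map.

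First I would record the general observation that for any injective map $f \colon S \to S$ and any finite subset $X \subset S$, the preimage $f^{-1}(X)$ is finite with $|f^{-1}(X)| \leq |X|$. Applied to each $L_k$ with $k \in K$, and to the finite set $k\Omega \setminus \Omega$ (finite because $\Omega$ is), this shows that each $L_k^{-1}(k\Omega \setminus \Omega)$ is finite. Then by \eqref{e:formula-for-boundary} the set $\partial_K(\Omega)$ is a finite union of finite sets, hence finite, and the union bound yields
\[
|\partial_K(\Omega)| \leq \sum_{k \in K} |L_k^{-1}(k\Omega \setminus \Omega)| \leq \sum_{k \in K} |k\Omega \setminus \Omega|,
\]
which is the first half of \eqref{eq:boundary-2}.

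For the second half I would apply \eqref{e:formula-for-boundary-etoile}. The same injectivity argument gives that each $L_k^{-1}(\Omega \setminus k\Omega)$ is finite with $|L_k^{-1}(\Omega \setminus k\Omega)| \leq |\Omega \setminus k\Omega|$. The key auxiliary identity, which I would record separately, is that injectivity of $L_k$ on the finite set $\Omega$ gives $|k\Omega| = |\Omega|$, hence
\[
|\Omega \setminus k\Omega| = |\Omega| - |\Omega \cap k\Omega| = |k\Omega| - |\Omega \cap k\Omega| = |k\Omega \setminus \Omega|.
\]
Using that the union in \eqref{e:formula-for-boundary-etoile} is \emph{disjoint} and combining with the bound for $\partial_K(\Omega)$ already obtained, I get
\[
|\partial_K^*(\Omega)| \leq |\partial_K(\Omega)| + \sum_{k \in K} |L_k^{-1}(\Omega \setminus k\Omega)| \leq \sum_{k \in K}|k\Omega \setminus \Omega| + \sum_{k \in K}|\Omega \setminus k\Omega| = 2 \sum_{k \in K}|k\Omega \setminus \Omega|,
\]
and finiteness of $\partial_K^*(\Omega)$ follows from the same display.

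I do not anticipate any real obstacle: the entire argument is bookkeeping on top of the previous proposition, the identity $|L_k^{-1}(X)|\leq |X|$ for injective $L_k$, and the identity $|k\Omega|=|\Omega|$ on finite sets. The only thing to be a little careful about is to use the disjointness in \eqref{e:formula-for-boundary-etoile} so that the factor of $2$ (and not something larger) is correct.
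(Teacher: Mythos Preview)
Your proof is correct and follows essentially the same approach as the paper: both arguments read off the two inequalities from \eqref{e:formula-for-boundary} and \eqref{e:formula-for-boundary-etoile}, using injectivity of $L_k$ to bound $|L_k^{-1}(X)|$ by $|X|$ and the identity $|k\Omega|=|\Omega|$ to convert $|\Omega\setminus k\Omega|$ into $|k\Omega\setminus\Omega|$. One minor remark: the disjointness in \eqref{e:formula-for-boundary-etoile} is not actually needed for the factor $2$, since the plain union bound $|A\cup B|\leq |A|+|B|$ already suffices; the paper likewise only uses subadditivity here.
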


\begin{proof}
  Let $k \in K$. We first observe that by left-cancellability of $k$, we have $|k\Omega| = |\Omega|$ and therefore   $ |\Omega \setminus k\Omega| =    |k\Omega \setminus \Omega|$.
  Also, the injectivity of $L_k$ implies that the sets 
$L_k^{-1}(k\Omega \setminus \Omega)$ and $L_k^{-1}(\Omega \setminus k\Omega)$  are finite of cardinality   $|L_k^{-1}(k\Omega \setminus \Omega)| = |k\Omega \setminus \Omega|$  and $|L_k^{-1}(\Omega \setminus k\Omega)| \leq  |k\Omega \setminus \Omega| = |k\Omega \setminus \Omega|$.
 Thus, taking cardinalities in 
 \eqref{e:formula-for-boundary}, we get
 \begin{align*}
|\partial_{K}(\Omega)| &= |  \bigcup_{k\in K}{L_k}^{-1}(k\Omega \setminus \Omega)| \\
& \leq \sum_{k \in K} |L_k^{-1}(k\Omega \setminus \Omega)| \\
&= \sum_{k \in K} | k\Omega \setminus \Omega|, 
\end{align*}
which yields the first inequality in \eqref{eq:boundary-2}.
On the other hand, we deduce from \eqref{e:formula-for-boundary-etoile} that
 \begin{align*}
 |\partial_{K}^*(\Omega)|
  &\leq |\partial_{K}(\Omega) \coprod \left(\bigcup_{k\in K}{L_k}^{-1}(\Omega \setminus k\Omega)\right)| \\
  &\leq |\partial_{K}(\Omega)|  + \sum_{k\in K}|{L_k}^{-1}(\Omega \setminus k\Omega)| \\
  &\leq \sum_{k \in K} | k\Omega \setminus \Omega| + \sum_{k \in K} | \Omega \setminus k\Omega| \\
&= 2 \sum_{k \in K} | k\Omega \setminus \Omega|,
 \end{align*}
which gives the  second inequality in \eqref{eq:boundary-2}.
  \end{proof}

Suppose that   $K$ and $ \Omega $ are non-empty finite subsets of a semigroup $S$ and that every element of $K$ is left-cancellable. 
We  then define the \emph{relative amenability constants} $\alpha(\Omega,K)$ and $\alpha^*(\Omega,K)$
of $\Omega$ with respect to $K$ by
$$
\alpha(\Omega,K) := \frac{\vert \partial_K(\Omega)\vert}{\vert \Omega\vert}
$$
and
$$
\alpha^*(\Omega,K) := \frac{\vert \partial_K^*(\Omega)\vert}{\vert \Omega\vert}.
$$

\section{F\o lner nets}
\label{s:folner-nets}

For left-cancellative semigroups, we have the following characterizations of left-amenability.

\begin{theorem}[F\o lner-Frey-Namioka]
\label{t:folner-equiv-amenable}
Let $S$ be a left-cancellative semigroup.
Then the following conditions are equivalent:
\begin{enumerate}[\rm (a)]
\item
$S$ is left-amenable;
\item
for every finite subset $K \subset S$ and every real number $\varepsilon > 0$, there exists a 
non-empty finite subset $F \subset S$ such that $\vert kF \setminus F \vert \leq \varepsilon  \vert F \vert$ for all $k \in K$; 
\item
there exists a directed net $(F_j)_{j \in J}$ of non-empty finite subsets of $S$ such that
\begin{equation}
\label{e:Folner-net-condition}
\lim_{j} \frac{\vert s F_j \setminus F_j \vert}{\vert F_j \vert} = 0 \quad \text{for all $s \in S$.}
\end{equation}
\end{enumerate}
\end{theorem}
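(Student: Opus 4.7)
The plan is to establish the cycle (c)$\Rightarrow$(a)$\Rightarrow$(b)$\Rightarrow$(c); (b)$\Rightarrow$(c) and (c)$\Rightarrow$(a) are essentially formal, while the substantive work lies in (a)$\Rightarrow$(b), which I would handle by a Namioka-type convexity argument exploiting left-cancellativity throughout.

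For (b)$\Rightarrow$(c) I would index a net by pairs $(K,\varepsilon)$ with $K \subset S$ finite and $\varepsilon > 0$, directed by inclusion of $K$ and reverse order on $\varepsilon$, and attach to $(K,\varepsilon)$ the finite set produced by (b). For (c)$\Rightarrow$(a) I would average: setting $m_j(f) := |F_j|^{-1}\sum_{s \in F_j} f(s)$ defines a mean, and Banach-Alaoglu gives a weak-$*$ cluster point $m$. Since $L_s$ is injective by left-cancellativity, $\sum_{t \in F_j} f(st) = \sum_{u \in sF_j} f(u)$, whence $|m_j(f \circ L_s) - m_j(f)| \le 2\|f\|_\infty |sF_j \setminus F_j|/|F_j| \to 0$, showing $m$ is left-invariant.

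For (a)$\Rightarrow$(b), fix a left-invariant mean $m$, a finite $K \subset S$, and $\varepsilon > 0$. Left-cancellativity lets me define a translate of $\mu \in \ell^\infty(S)^*$-predual element $\mu \in \ell^1(S)$ by $(s\mu)(t) := \mu(u)$ when $t = su$ (with $u$ unique) and $(s\mu)(t) := 0$ otherwise; this is an $\ell^1$-isometry whose dual on $\ell^\infty(S)$ is $f \mapsto f \circ L_s$. Let $P(S)$ denote the finitely supported probability measures on $S$, and consider the convex set
\[
C := \bigl\{(s\mu - \mu)_{s \in K} : \mu \in P(S)\bigr\} \subset \ell^1(S)^K.
\]
I would argue that $0$ lies in the norm closure of $C$: by Mazur's theorem it suffices to place it in the weak closure, and if that failed, Hahn-Banach would yield $(f_s)_{s \in K} \in \ell^\infty(S)^K$ and $c > 0$ with $\sum_{s \in K} \langle f_s, s\mu - \mu\rangle \ge c$ for every $\mu \in P(S)$; testing on $\mu = \delta_t$ (so that $s\delta_t = \delta_{st}$) gives $g := \sum_{s \in K}(f_s \circ L_s - f_s) \ge c$ pointwise, whence $m(g) \ge c$, contradicting $m(f_s \circ L_s) = m(f_s)$. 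Thus for any $\varepsilon' > 0$ there is $\mu \in P(S)$ with $\|s\mu - \mu\|_1 < \varepsilon'$ for each $s \in K$ (Reiter's condition).

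The final step is to convert Reiter to F\o lner by the layer-cake method. Set $A_t := \{r \in S : \mu(r) > t\}$, so $\int_0^\infty |A_t|\,dt = \|\mu\|_1 = 1$. Left-cancellability of $s$ identifies $\{r : (s\mu)(r) > t\}$ with $sA_t$, giving $\|s\mu - \mu\|_1 = \int_0^\infty |sA_t \triangle A_t|\,dt$. Summing over $s \in K$ and comparing with $\int_0^\infty |A_t|\,dt = 1$, some level $t$ produces a non-empty $F := A_t$ with $\sum_{s \in K}|sF \triangle F| < |K|\varepsilon' |F|$, so $|sF \setminus F| < |K|\varepsilon'|F|$ for each $s \in K$; choosing $\varepsilon' := \varepsilon/|K|$ at the outset yields (b). I expect the main obstacle to be the Hahn-Banach step, where the $S$-action on $\ell^1(S)$ has to be arranged precisely so that left-cancellativity makes translation an isometry whose dual is $L_s$ on $\ell^\infty(S)$; this identification is exactly what causes the left-invariance of $m$ to kill the separating functional.
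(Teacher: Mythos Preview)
Your argument is correct and self-contained. The paper, by contrast, does not prove the theorem directly: it cites \cite[Corollary~4.3]{namioka} for (a)$\Leftrightarrow$(b) and declares (b)$\Leftrightarrow$(c) to be straightforward. Your (a)$\Rightarrow$(b) is precisely Namioka's argument (Hahn--Banach/Mazur to get Reiter's condition, then the layer-cake decomposition to extract a F\o lner set), so in substance you are reproducing the cited material rather than taking a different route. Your cycle additionally supplies a direct (c)$\Rightarrow$(a) via averaging and a weak-$*$ cluster point, which the paper does not spell out; this is standard and uses left-cancellativity exactly where you say, to ensure $|sF_j|=|F_j|$ so that the symmetric-difference bound $|m_j(f\circ L_s)-m_j(f)|\le 2\|f\|_\infty\,|sF_j\setminus F_j|/|F_j|$ holds. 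The net effect is that your write-up is more informative than the paper's two-line proof, at the cost of length; both are valid.
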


\begin{proof}
The equivalence of conditions (a) and (b) follows from \cite[Corollary 4.3]{namioka}.
On the other hand, the equivalence of (b) and (c) is straightforward (see for example the discussion in   \cite[Section 1]{cck}).
\end{proof}

\begin{remark}
If we drop the left-cancellativity hypothesis in the preceding theorem, the equivalence between (b) and (c), as well as the fact that (a) implies (b), remain true.
However,  every finite semigroup $S$ trivially satisfies (b) by taking $F = S$.
As there exist finite semigroups that are not left-amenable,
it follows that the implication (b) $\Rightarrow$ (a) becomes false if we remove the 
left-cancellativity hipothesis in Theorem \ref{t:folner-equiv-amenable}.
\end{remark}

A directed net $(F_j)_{j \in J}$ of non-empty finite subsets of a semigroup $S$ satisfying 
\eqref{e:Folner-net-condition} is called a \emph{F\o lner net} for $S$. 

\begin{proposition}
\label{p:Folner-boundaries}
Let $S$ be a left-cancellative and left-amenable semigroup.
Let $(F_j)_{j \in J}$ 
be a F\o lner net for $S$ and let $K$ be a non-empty finite subset of $S$.
Then one has $\lim_j \alpha(F_j,K) = \lim_j \alpha^*(F_j,K) = 0$.
 \end{proposition}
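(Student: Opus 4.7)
The plan is to deduce the proposition directly from Corollary \ref{c:boundary-properties-cardinalities} combined with the Følner condition \eqref{e:Folner-net-condition}, exploiting finiteness of $K$.

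First I would invoke Corollary \ref{c:boundary-properties-cardinalities}: since $S$ is left-cancellative, every element of the non-empty finite set $K$ is left-cancellable, so for each index $j$ the sets $\partial_K(F_j)$ and $\partial_K^*(F_j)$ are finite and satisfy
\begin{equation*}
|\partial_K(F_j)| \leq \sum_{k \in K} |kF_j \setminus F_j|
\quad\text{and}\quad
|\partial_K^*(F_j)| \leq 2\sum_{k \in K} |kF_j \setminus F_j|.
\end{equation*}
Dividing each inequality by $|F_j|$ yields
\begin{equation*}
\alpha(F_j,K) \leq \sum_{k \in K} \frac{|kF_j \setminus F_j|}{|F_j|}
\quad\text{and}\quad
\alpha^*(F_j,K) \leq 2\sum_{k \in K} \frac{|kF_j \setminus F_j|}{|F_j|}.
\end{equation*}

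Next I would apply the Følner condition \eqref{e:Folner-net-condition}: for every single $k \in K \subset S$, we have $\lim_j |kF_j \setminus F_j|/|F_j| = 0$. Since $K$ is finite, the right-hand sides above are finite sums of nets each tending to $0$, hence themselves tend to $0$. Therefore $\lim_j \alpha(F_j,K) = \lim_j \alpha^*(F_j,K) = 0$, as required.

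There is no real obstacle here: the whole content has been packaged into Corollary \ref{c:boundary-properties-cardinalities}, which expresses the boundary in terms of the ``translate differences'' $kF_j \setminus F_j$ that the Følner condition directly controls; the only thing to notice is that finiteness of $K$ is what allows passage from the pointwise Følner condition (one $s$ at a time) to a uniform bound on the full boundary.
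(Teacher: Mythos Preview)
Your proof is correct and follows essentially the same route as the paper: both invoke Corollary~\ref{c:boundary-properties-cardinalities} to bound $\alpha(F_j,K)$ and $\alpha^*(F_j,K)$ by (a constant times) $\sum_{k\in K}|kF_j\setminus F_j|/|F_j|$, and then use finiteness of $K$ together with the F\o lner condition to conclude that this sum tends to $0$. The paper just spells out the finite-sum-of-nets argument via an explicit $\varepsilon$--$j_0$ estimate, obtaining $\alpha(F_j,K)\leq |K|\varepsilon$ and $\alpha^*(F_j,K)\leq 2|K|\varepsilon$.
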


\begin{proof}
Let   $\varepsilon > 0$.
Since $(F_j)_{j \in J}$ is a F\o lner net for $S$,
there exists $j_0 \in J$ such that
$\vert s F_j \setminus F_j \vert/\vert F_j \vert \leq \varepsilon$ for all $s \in K$ and $j \geq j_0$.
This implies 
$\alpha(F_j,K) \leq |K|\varepsilon$ and  $\alpha^*(F_j,K) \leq 2|K|\varepsilon$ for all $j \geq j_0$ by using the inequalities in Proposition~\ref{c:boundary-properties-cardinalities}.
Consequently, we have $\lim_j \alpha(F_j,K) = \lim_j \alpha^*(F_j,K) = 0$.
\end{proof}

\section{Cellular automata}
\label{s:ca}

If $E$ is a set equipped with a left action of a semigroup $S$, one says that a map $f \colon E \to E$ is $S$-\emph{equivariant} if it commutes with the $S$-action, i.e., if one has
$f(s x) = s f(x)$ for all $s \in S$ and $x \in E$.

\begin{proposition}
\label{p:ca-equivariant}
Let $S$ be a semigroup and let $A$ be a set.
Then every cellular automaton
$\tau \colon A^S \to A^S$ is $S$-equivariant.
\end{proposition}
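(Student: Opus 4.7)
The plan is to verify $\tau(sx) = s\tau(x)$ pointwise, evaluating both configurations at an arbitrary $t \in S$ and reducing everything to the semigroup action identity $s_1(s_2 x) = (s_1 s_2) x$ already established in the introduction.

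First I would fix $s \in S$ and $x \in A^S$ and compute the right-hand side. By the definition of the $S$-shift, $(s\tau(x))(t) = \tau(x)(ts)$ for every $t \in S$, and then by the local rule \eqref{e:local-property} with memory set $M$ and local map $\mu$, this equals $\mu(((ts)x)\vert_M)$.

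Next I would compute the left-hand side similarly. Applying \eqref{e:local-property} to the configuration $sx$ gives $\tau(sx)(t) = \mu((t(sx))\vert_M)$. The key observation is that $t(sx) = (ts)x$, which is exactly the action property $s_1(s_2 x) = (s_1 s_2) x$ recorded in Section~\ref{s:introduction} (it follows in one line from $sx = x \circ R_s$ together with $R_{s_2} \circ R_{s_1} = R_{s_1 s_2}$). Substituting this identity yields $\tau(sx)(t) = \mu(((ts)x)\vert_M)$, which matches $(s\tau(x))(t)$.

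There is essentially no obstacle here: the statement is a direct consequence of the definition of a cellular automaton together with associativity of the shift action, and the only thing to be careful about is the order of composition in the definition $sx = x \circ R_s$, which reverses to $(ts)x$ rather than $(st)x$ when one iterates.
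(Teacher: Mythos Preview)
Your proof is correct and follows essentially the same approach as the paper: both arguments evaluate $\tau(sx)$ and $s\tau(x)$ at an arbitrary point, apply the local rule \eqref{e:local-property} on each side, and close the gap using the action identity $t(sx)=(ts)x$. The only cosmetic difference is that the paper swaps the roles of $s$ and $t$, proving $\tau(tx)(s) = (t\tau(x))(s)$ in a single displayed chain of equalities.
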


\begin{proof}
Let $\tau \colon A^S \to A^S$ be a cellular automaton with memory set $M \subset S$
and local defining map $\mu \colon A^M \to A$.
Let $s,t \in S$ and $x \in A^S$.
 By applying \eqref{e:local-property}, we get
\[
\tau(tx)(s) = \mu( (s (t x))\vert_M)   
= \mu( ((s t) x)\vert_M) 
= \tau(x)(st) = (t \tau(x))(s).
\]
 Consequently, we have $\tau(t x) = t \tau(x)$.
This shows that $\tau$ is $S$-equivariant.
\end{proof}

\begin{proposition}
\label{p:tau-int-and-ext}
Let $S$ be a semigroup and let $A$ be a set.
Let $\tau \colon A^S \to A^S$ be a cellular automaton with memory set $M \subset S$.
Let $x_1,x_2 \in A^S$, $s \in S$,  and $\Omega \subset S$. Then the following hold:
\begin{enumerate}[\rm (i)]
\item
if the configurations $x_1$ and $x_2$ coincide on $Ms$ then   $\tau(x_1)(s)=\tau(x_2)(s)$;
  \item
if the configurations $x_1$ and $x_2$ coincide on $\Omega$ then the configurations $\tau(x_1)$ and $\tau(x_2)$ coincide on $\inte_M(\Omega)$;
\item
if the configurations $x_1$ and $x_2$ coincide outside $\Omega$ then the configurations 
$\tau(x_1)$ and $\tau(x_2)$ coincide outside $\adhe_M(\Omega)$.
\end{enumerate}
\end{proposition}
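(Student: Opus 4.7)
The plan is to prove (i) directly from the local definition of a cellular automaton, then deduce (ii) and (iii) as immediate consequences by unpacking the definitions of $\inte_M(\Omega)$ and $\adhe_M(\Omega)$.

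For (i), let $\mu \colon A^M \to A$ be a local defining map for $\tau$ relative to $M$. The key observation is that, since $(sx_i)(m) = x_i(ms)$ for every $m \in M$, the restriction $(sx_i)|_M \in A^M$ depends only on the values of $x_i$ on the set $Ms$. Hence if $x_1$ and $x_2$ agree on $Ms$, then $(sx_1)|_M = (sx_2)|_M$, and applying \eqref{e:local-property} gives
\[
\tau(x_1)(s) = \mu((sx_1)|_M) = \mu((sx_2)|_M) = \tau(x_2)(s).
\]

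For (ii), I would pick $s \in \inte_M(\Omega)$. By definition of the $M$-interior we have $Ms \subset \Omega$, so $x_1$ and $x_2$ coincide on $Ms$ (since they coincide on $\Omega$), and part (i) yields $\tau(x_1)(s) = \tau(x_2)(s)$. For (iii), I would pick $s \notin \adhe_M(\Omega)$. By definition of the $M$-adherence, $Ms \cap \Omega = \varnothing$, that is, $Ms \subset S \setminus \Omega$. Since $x_1$ and $x_2$ coincide on $S \setminus \Omega$, they coincide on $Ms$, and again part (i) gives $\tau(x_1)(s) = \tau(x_2)(s)$.

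There is no real obstacle here; the argument is essentially a bookkeeping exercise in rewriting $(sx)|_M$ in terms of the values of $x$ on $Ms$ and then translating the two conditions $Ms \subset \Omega$ and $Ms \cap \Omega = \varnothing$ into the $M$-interior and the complement of the $M$-adherence, respectively. The only subtlety worth flagging is that this proposition is stated for a completely general semigroup $S$, with no cancellativity assumption on elements of $M$, so the proof must not appeal to properties of $L_k^{-1}$ as in the earlier boundary formulas.
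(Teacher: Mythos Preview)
Your proof is correct and follows exactly the same approach as the paper: prove (i) directly from \eqref{e:local-property} via the observation that $(sx)|_M$ depends only on $x|_{Ms}$, then deduce (ii) and (iii) from (i) using $Ms \subset \Omega$ for $s \in \inte_M(\Omega)$ and $Ms \cap \Omega = \varnothing$ for $s \notin \adhe_M(\Omega)$. The paper's proof is simply a more terse version of yours.
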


\begin{proof}
Assertion (i) immediately follows from 
formula \eqref{e:local-property}.
Assertion (i) gives us (ii) since $Ms \subset \Omega$ for all $s \in \inte_M(\Omega)$.
We also deduce (iii) from (i) since $Ms$ does not meet $\Omega$ for all $s \in S \setminus  \adhe_M(\Omega)$.
 \end{proof}

Let $S$ be a semigroup and let $A$ be a set.
We equip  the set $A^S = \prod_{s \in S} A  $  with its \emph{prodiscrete} topology, that is, with the product topology obtained by taking the discrete topology on each factor $A$ of $A^S$.
The space $A^S$ is Hausdorff and 
totally disconnected. Moreover, $A^S$ is metrizable if $S$ is countable,
and it follows from the Tychonoff product theorem that it is compact if $A$ is finite.

\begin{proposition}
\label{p:ca-continuous}
Let $S$ be a semigroup and let $A$ be a set.
Then every cellular automaton $\tau \colon A^S \to A^S$ is continuous with respect to the prodiscrete topology.
\end{proposition}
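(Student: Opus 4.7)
The plan is to invoke the universal property of the product topology: a map into the product $A^S = \prod_{s \in S} A$ is continuous if and only if its composition with each projection $\pi_s \colon A^S \to A$, $\pi_s(y) = y(s)$, is continuous. So it suffices to show that, for every fixed $s \in S$, the evaluation map $E_s \colon A^S \to A$ defined by $E_s(x) := \tau(x)(s)$ is continuous with respect to the prodiscrete topology on $A^S$ and the discrete topology on $A$.

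Let $M \subset S$ be a memory set for $\tau$ and let $\mu \colon A^M \to A$ be the corresponding local defining map. The key point is that, by Proposition~\ref{p:tau-int-and-ext}(i), if $x_1, x_2 \in A^S$ coincide on the (finite) set $Ms$, then $\tau(x_1)(s) = \tau(x_2)(s)$. Equivalently, $E_s$ factors as
\[
A^S \xrightarrow{\ \rho_s\ } A^{Ms} \xrightarrow{\ \widetilde\mu\ } A,
\]
where $\rho_s(x) := x\vert_{Ms}$ is the restriction to $Ms$ and $\widetilde\mu$ is the map induced by $\mu$ under the bijection $A^{Ms} \to A^M$, $y \mapsto (t \mapsto y(ts))$.

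The restriction $\rho_s$ is continuous because it is a finite tuple of coordinate projections into the discrete factors $A$. Since $Ms$ is finite, the product topology on $A^{Ms}$ agrees with the discrete topology (a finite product of discrete spaces is discrete), so $\widetilde\mu$ is automatically continuous. Hence $E_s = \widetilde\mu \circ \rho_s$ is continuous, and $\tau$ is continuous as required.

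There is no real obstacle here beyond keeping the topologies straight; the content is entirely Proposition~\ref{p:tau-int-and-ext}(i), which packages the fact that the value $\tau(x)(s)$ depends on only finitely many coordinates of $x$.
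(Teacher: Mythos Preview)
Your proof is correct and follows essentially the same idea as the paper's; the only difference is packaging. The paper checks continuity directly via basic neighborhoods (if a neighborhood of $\tau(x)$ is determined by a finite set $\Omega$, then its preimage contains the neighborhood of $x$ determined by $M\Omega$), whereas you invoke the universal property of the product and factor each coordinate map $E_s$ through the discrete space $A^{Ms}$. One small slip worth fixing: the map $A^{Ms} \to A^M$, $y \mapsto (t \mapsto y(ts))$, is in general only an injection, not a bijection, since the proposition does not assume $S$ is right-cancellative (so $R_s\vert_M$ may fail to be injective and $|Ms|$ may be strictly smaller than $|M|$). This does not affect your argument, which only needs the map to be well-defined so that $\widetilde\mu$ makes sense and $E_s$ factors through the finite, hence discrete, product $A^{Ms}$.
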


\begin{proof}
Let $\tau \colon A^S \to A^S$ be a cellular automaton with memory set $M \subset S$ and local defining map $\mu \colon A^M \to A$.
Let $x \in A^S$ and let $N \subset A^S$ be a neighborhood of $\tau(x)$.
By definition of  the prodiscrete topology on $A^S$, there exists a finite subset $\Omega \subset S$ such that $N$ contains all configurations in $ A^S$ that coincide with $\tau(x)$ on $\Omega$.
By applying Proposition~\ref{p:tau-int-and-ext}.(i),
we deduce that $\tau^{-1}(N)$ contains all configurations in $A^S$ that coincide with $x$ 
on $M\Omega = \cup_{s \in \Omega}Ms$.
Since $M\Omega$ is finite, it follows that $\tau^{-1}(N)$ is a neighborhood of $x$.
This shows that $\tau$ is continuous for the prodiscrete topology.
\end{proof}

\section{Tilings}
\label{s:tilings}
Let $S$ be a semigroup and $K$  a subset of $S$.
We say that a subset $T \subset S$ is a $K$-\emph{tiling} of $S$ if it satisfies the following conditions:
\begin{enumerate}[\rm (T-1)]
\item
if $t_1,t_2 \in T$ and $t_1 \not= t_2$ then $Kt_1 \cap Kt_2 = \varnothing$;
\item
for every $s \in S$, there exists $t \in T$ such that $Ks \cap Kt \not= \varnothing$.
\end{enumerate}

\begin{proposition}
\label{p:tilings-exist}
Let $S$ be a semigroup and let $K$ be a nonempty subset of $S$.
Then $S$ admits  a $K$-tiling. 
\end{proposition}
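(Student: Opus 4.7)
The proposal is to produce a $K$-tiling as a maximal element, in the inclusion order, of the family of subsets of $S$ satisfying condition (T-1). The existence of such a maximal element is a standard Zorn's lemma argument, and the maximality will automatically yield condition (T-2).

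More precisely, I would let $\FF$ denote the collection of all subsets $T \subset S$ (including the empty set) such that $Kt_1 \cap Kt_2 = \varnothing$ for all distinct $t_1, t_2 \in T$, partially ordered by inclusion. The family $\FF$ is nonempty since $\varnothing \in \FF$, and it is inductive: given a totally ordered subfamily $(T_i)_{i \in I}$ of $\FF$, its union $T := \bigcup_{i \in I} T_i$ still satisfies (T-1), because any two distinct elements of $T$ already lie together in some $T_i$ by the chain property. By Zorn's lemma, $\FF$ admits a maximal element $T$.

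It remains to check that this maximal $T$ satisfies (T-2). Fix $s \in S$. Since $K$ is nonempty, the set $Ks$ is nonempty, so $Ks \cap Ks \neq \varnothing$. Hence if $s \in T$ we may take $t = s$ in (T-2). Otherwise, suppose $s \notin T$ and, for contradiction, that $Ks \cap Kt = \varnothing$ for all $t \in T$. Then $T \cup \{s\}$ still belongs to $\FF$, which contradicts the maximality of $T$. Therefore there exists $t \in T$ with $Ks \cap Kt \neq \varnothing$, which is precisely (T-2).

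There is no real obstacle here: the statement is a direct application of Zorn's lemma once the correct family is identified, and the only subtlety is the use of nonemptiness of $K$ to handle the case $s \in T$ (to ensure $Ks \neq \varnothing$, so that $s$ itself witnesses (T-2)). No hypothesis of cancellativity or amenability is required.
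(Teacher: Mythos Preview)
Your proof is correct and follows essentially the same approach as the paper: both apply Zorn's lemma to the family of subsets satisfying (T-1) and deduce (T-2) from maximality. The only cosmetic difference is that the paper works with the collection of \emph{non-empty} subsets satisfying (T-1) (using a singleton $\{s_0\}$ to witness non-emptiness), whereas you allow the empty set; your more explicit verification of (T-2), including the observation that non-emptiness of $K$ is needed for the case $s \in T$, is a welcome addition.
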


\begin{proof}
Consider the set $\TT$ consisting of all non-empty subsets $T \subset S$
satisfying  condition (T-1) above. The set $\TT$ is not empty since $\{s_0\} \in \TT$ for any 
$s_0 \in S$.  
On the other hand, the set $\TT$, partially ordered 
by inclusion, is inductive. 
Indeed, if $\TT'$ is a  totally ordered subset of $\TT$, then
$M = \bigcup_{T' \in \TT'} T'$ belongs to $\TT$ and is an upper bound for $\TT'$.
By applying Zorn's lemma, we deduce that $\TT$ admits a maximal element $T$. 
Then $T$ satisfies (T-1) since $T \in \TT$.
On the other hand,  $T$ also satisfies (T-2) by maximality.
Consequently, $T$ is a $K$-tiling.  
\end{proof}

\begin{proposition}
\label{p:maj--Nj-su-Fj}
Let $S$ be a left-cancellative and left-amenable semigroup. 
Let $(F_j)_{j \in J}$ be a F\o lner net for $S$. Let $K$ be a non-empty finite subset of $S$ and suppose that $T \subset S$ is a $K$-tiling of $S$.
Let us set, for each $j \in J$,
$$
T_j := \{t \in T : Kt \subset F_j\}.  
$$
Then there exist a real number $\delta > 0$ and an element $j_0 \in J$ such that  
$$
|T_j| \geq \delta | F_j | \quad \text{for all  } j \geq j_0. 
$$
\end{proposition}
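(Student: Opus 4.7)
The plan is to show $|T_j|$ is essentially forced to be proportional to $|F_j|$ by a double-counting argument against the tiles, with the Følner property absorbing the boundary error.

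First I would rewrite the set $T_j$ in the language of Section~\ref{s:interiors-boundaries}: the condition $Kt \subset F_j$ is literally $t \in \inte_K(F_j)$, so $T_j = T \cap \inte_K(F_j)$. In parallel I would introduce the auxiliary set
\[
\widehat{T}_j := T \cap \adhe_K(F_j) = \{t \in T : Kt \cap F_j \neq \varnothing\},
\]
so that $T_j \subset \widehat{T}_j$ and
\[
\widehat{T}_j \setminus T_j \;\subset\; \adhe_K(F_j) \setminus \inte_K(F_j) \;=\; \partial^*_K(F_j).
\]

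Next I would use the tiling property (T-2) as a covering device for $\inte_K(F_j)$: given $s \in \inte_K(F_j)$, pick $t \in T$ with $k_1 s = k_2 t$ for some $k_1,k_2 \in K$; since $k_1 s \in K s \subset F_j$, we get $k_2 t \in F_j$, hence $t \in \widehat{T}_j$. This assigns to each $s \in \inte_K(F_j)$ a tile in $\widehat{T}_j$. To bound the multiplicity of this assignment, I would fix $t \in \widehat{T}_j$ and count how many $s \in S$ can satisfy $Ks \cap Kt \neq \varnothing$: each pair $(k_1,k_2) \in K \times K$ determines at most one $s$ via the equation $k_1 s = k_2 t$, by left-cancellability of $k_1$. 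This yields
\[
|\inte_K(F_j)| \;\leq\; |K|^2 \, |\widehat{T}_j| \;\leq\; |K|^2 \bigl(|T_j| + |\partial^*_K(F_j)|\bigr).
\]

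Finally I would rearrange and feed in the Følner estimates. Writing $|\inte_K(F_j)| = |F_j| - |\partial_K(F_j)| \geq |F_j| - |\partial^*_K(F_j)|$, the previous inequality gives
\[
|T_j| \;\geq\; \frac{|F_j| - |\partial^*_K(F_j)|}{|K|^2} - |\partial^*_K(F_j)|.
\]
By Proposition~\ref{p:Folner-boundaries}, $\alpha^*(F_j,K) = |\partial^*_K(F_j)|/|F_j| \to 0$, so I can pick $j_0$ with $|\partial^*_K(F_j)| \leq \varepsilon |F_j|$ for $j \geq j_0$, where $\varepsilon := 1/(2(1+|K|^2))$. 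A one-line computation then produces $|T_j| \geq |F_j|/(2|K|^2)$, so $\delta := 1/(2|K|^2)$ works.

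The main conceptual obstacle is the counting step: translating (T-2) into a quantitative upper bound on $|\inte_K(F_j)|$ via $|\widehat{T}_j|$, and recognizing that the gap $\widehat{T}_j \setminus T_j$ lives inside the asymmetric boundary $\partial^*_K(F_j)$ (and not merely $\partial_K(F_j)$), which is exactly why both boundaries were introduced in Section~\ref{s:interiors-boundaries}. Once that double-counting is in place, the rest is purely an application of Følner's condition.
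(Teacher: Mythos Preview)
Your argument is essentially the paper's proof: introduce $\widehat{T}_j = T \cap \adhe_K(F_j)$, use (T-2) together with left-cancellability to get $|\inte_K(F_j)| \leq |K|^2\,|\widehat{T}_j|$, and control $\widehat{T}_j \setminus T_j$ by $\partial^*_K(F_j)$.

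Two small slips to clean up. First, the identification $T_j = T \cap \inte_K(F_j)$ is not quite right, because by definition $\inte_K(F_j) \subset F_j$, whereas $Kt \subset F_j$ does not force $t \in F_j$; only $T \cap \inte_K(F_j) \subset T_j$ holds in general. This is harmless, since you never use the reverse inclusion. Second, the step $|\inte_K(F_j)| = |F_j| - |\partial_K(F_j)| \geq |F_j| - |\partial^*_K(F_j)|$ presupposes $\partial_K(F_j) \subset \partial^*_K(F_j)$, i.e.\ $F_j \subset \adhe_K(F_j)$, which need not hold (an $s \in F_j$ with $Ks \cap F_j = \varnothing$ lies in $\partial_K(F_j)$ but not in $\partial^*_K(F_j)$). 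Just keep $|\partial_K(F_j)|$ in that line and invoke $\alpha(F_j,K) \to 0$ alongside $\alpha^*(F_j,K) \to 0$ from Proposition~\ref{p:Folner-boundaries}; this is exactly what the paper does, ending with $\delta = 1/(4|K|^2)$.
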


\begin{proof}
Define $T_j^* \subset T$ by
\[
T_j^* := \{t \in T: Kt \cap F_j \neq \varnothing\} = T \cap \adhe_K(F_j).
\]
Consider an element $s \in \inte_K(F_j)$.
Since $T$ is a $K$-tiling, it follows from condition (T-2) that we can find $t \in T$ such that
$Ks \cap Kt \not= \varnothing$.
As $Ks \subset F_j$,
we have $t \in T_j^*$.
We deduce that
\[
\inte_K(F_j)
\subset \bigcup_{k \in K} L_k^{-1}(KT_j^*).
\]
This implies
\begin{align*}
|\inte_K(F_j)|
&\leq \left|\bigcup_{k \in K} L_k^{-1}(K T_j^*)\right| \\
&\leq \sum_{k \in K} \left| L_k^{-1}(K T_j^*)\right| \\
&\leq \sum_{k \in K} |K T_j^*| && \text{(since $L_k$ is injective for each $k \in K$)} \\
& \leq |K|^2 |T_j^*|
\end{align*}
and hence
\begin{equation}
\label{e:t-j-star}
\frac{|T_j^*|}{|F_j|} \geq \frac{\vert \inte_K(F_j) \vert}{|K|^2 |F_j|}  = 
\frac{|F_j| - |\partial_K(F_j)|}{|K|^2 |F_j|} = \frac{1 - \alpha(F_j,K)}{|K|^2}.
\end{equation}
On the other hand, 
since $T_j \subset T_j^* = T \cap \adhe_K(F_j)$ and $T \cap \inte_K(F_j) \subset T_j$,  
we have 
\[
T_j^* \setminus T_j  \subset  \adhe_K(F_j)
\setminus \inte_K(F_j)  = \partial^*_K(F_j)
\] 
and therefore
\begin{equation}
\label{e:majore-tjstar-tj}
|T_j^* \setminus T_j| \leq |\partial^*_K(F_j)|.
\end{equation}
Finally, we get
\begin{align*}
\frac{|T_j|}{|F_j|}
&=   \frac{|T_j^*|}{|F_j|} -
   \frac{|T_j^* \setminus T_j|}{|F_j|} \\
& \geq  \frac{|T_j^*|}{|F_j|} -
\frac{|\partial^*_K(F_j)|}{|F_j|} && \text{(by \eqref{e:majore-tjstar-tj})} \\
&= \frac{|T_j^*|}{|F_j|} - \alpha^*(F_j,K) \\
& \geq \frac{1 - \alpha(F_j,K)}{|K|^2}  - \alpha^*(F_j,K) 
&& \text{(by \eqref{e:t-j-star})}.
\end{align*}
By virtue of Proposition \ref{p:Folner-boundaries}, we have $\lim_j \alpha(F_j,K) = \lim_j \alpha^*(F_j,K) = 0$. Therefore we can find $j_0 \in J$ such that
$\alpha(F_j,K) \leq 1/2$ and $\alpha^*(F_j,K) \leq 1/(4|K|^2)$ for all $j \geq j_0$.
Setting $\delta = 1/(4|K|^2)$, we then get
\[
|T_j| \geq \delta |F_j|
\]
for all $j \geq j_0$.
\end{proof} 

\section{Entropy}
\label{s:entropy}

Let $S$ be a left-cancellative and left-amenable semigroup. 
Let $A$ be a finite set.
For $\Omega \subset S$, we denote by $\pi_{\Omega} \colon A^S \to A^\Omega$ the restriction map, i.e., the map defined by
$\pi_\Omega(x) = x\vert_\Omega$ for all $x \in A^S$.
Let $\FF = (F_j)_{j \in J}$ be a F\o lner net for $S$.
We define the \emph{entropy} $\ent_\FF(X)$  of a subset $X \subset A^S$ by   
\begin{equation}
\label{e:entropy}
\ent_\FF(X) := \limsup_j \frac{\log |  \pi_{F_j}(X) |}{\vert F_j \vert}.
\end{equation}
Note that one has $\ent_\FF(X) \leq \log \vert A \vert = \ent_\FF(A^S)$ and that $\ent_\FF(X) \leq \ent_\FF(Y)$ if $X \subset Y \subset A^S$. 

\begin{remark}
When $S$ is a cancellative left-amenable semigroup and $X \subset A^S$ is $S$-invariant (i.e.
$sx \in X$ for all $s \in S$ and $x \in X$), it immediately follows from the version of the Ornstein-Weiss lemma given in \cite[Theorem~1.1]{cck} that
the $\limsup$ in the definition of $\ent_\FF(X)$ is a true limit which is  independent of the choice of the F\o lner net $\FF$.
However, we will not use this fact in the proof of our main result.
\end{remark}

A fundamental property of entropy is that the entropy of a set of configurations cannot be increased by a cellular automaton. More precisely, we have the following result. 

\begin{proposition}
\label{p:tau-dcrease-ent}
Let $S$ be a left-cancellative and left-amenable semigroup,   
$\FF = (F_j)_{j \in J}$ a F\o lner net for $S$, and $A$ a finite set. 
Let $\tau \colon A^S \to A^S$ be a cellular automaton and let $X \subset A^S$.
Then one has
\[
\ent_{\FF}(\tau(X)) \leq \ent_\FF(X).
\]
\end{proposition}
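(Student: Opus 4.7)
The plan is to exploit the local nature of cellular automata: if $M$ is a memory set for $\tau$, then the values of $\tau(x)$ on the $M$-interior of a finite set $\Omega$ depend only on the values of $x$ on $\Omega$ itself, so the set of patterns that $\tau(X)$ realizes on a F\o lner set $F_j$ is controlled by the patterns of $X$ on $F_j$ up to a discrepancy located on the boundary $\partial_M(F_j)$, which is negligible compared to $F_j$ by Proposition \ref{p:Folner-boundaries}.

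Fix a memory set $M$ for $\tau$. First I would apply Proposition \ref{p:tau-int-and-ext}(ii) with $\Omega = F_j$: whenever $x_1, x_2 \in X$ coincide on $F_j$, their images $\tau(x_1), \tau(x_2)$ coincide on $\inte_M(F_j)$. Consequently the assignment $x|_{F_j} \mapsto \tau(x)|_{\inte_M(F_j)}$ is a well-defined map of $\pi_{F_j}(X)$ onto $\pi_{\inte_M(F_j)}(\tau(X))$, so
$$|\pi_{\inte_M(F_j)}(\tau(X))| \leq |\pi_{F_j}(X)|.$$

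Next I would use the disjoint decomposition $F_j = \inte_M(F_j) \sqcup \partial_M(F_j)$: every element of $\pi_{F_j}(\tau(X))$ is determined by its restriction to $\inte_M(F_j)$ together with an (at worst arbitrary) choice of values on $\partial_M(F_j)$, hence
$$|\pi_{F_j}(\tau(X))| \leq |A|^{|\partial_M(F_j)|} \cdot |\pi_{\inte_M(F_j)}(\tau(X))| \leq |A|^{|\partial_M(F_j)|} \cdot |\pi_{F_j}(X)|.$$
Taking logarithms, dividing by $|F_j|$, and passing to the $\limsup$ over $j$ yields
$$\ent_{\FF}(\tau(X)) \leq \left( \limsup_j \alpha(F_j, M) \right) \log|A| + \ent_{\FF}(X).$$

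Proposition \ref{p:Folner-boundaries} gives $\lim_j \alpha(F_j, M) = 0$, so the boundary term drops out and the desired inequality $\ent_{\FF}(\tau(X)) \leq \ent_{\FF}(X)$ follows. I do not expect any genuine obstacle in this argument; its entire content lies in recognizing that locality of $\tau$ converts an estimate on $\pi_{F_j}(\tau(X))$ into an estimate on $\pi_{F_j}(X)$ with only a boundary-sized error, an error that is killed by the F\o lner property (for which left-cancellativity of $S$ is exactly what is needed to make Proposition \ref{p:Folner-boundaries} available).
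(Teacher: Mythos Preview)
Your proof is correct and follows essentially the same approach as the paper's: both use Proposition~\ref{p:tau-int-and-ext}(ii) to bound $|\pi_{\inte_M(F_j)}(\tau(X))|$ by $|\pi_{F_j}(X)|$, then use the decomposition $F_j = \inte_M(F_j) \sqcup \partial_M(F_j)$ to absorb the discrepancy into a factor of $|A|^{|\partial_M(F_j)|}$, and finally invoke Proposition~\ref{p:Folner-boundaries} to kill the boundary term in the limit. The only cosmetic difference is that the paper writes the inequality at the level of $\frac{\log|\pi_{F_j}(\tau(X))|}{|F_j|}$ before taking the $\limsup$, whereas you pass to the $\limsup$ directly; since $\limsup_j(a_j + b_j) \leq \limsup_j a_j + \limsup_j b_j$ and $\alpha(F_j,M) \to 0$, this is harmless.
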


\begin{proof}
Let $Y := \tau(X) \subset A^S$ denote the image of $X$ by $\tau$.
Suppose that $M \subset S$ is a memory set for $\tau$.
By Proposition~\ref{p:tau-int-and-ext}.(ii),
if  two configurations $x_1,x_2 \in X$ coincide on $F_j$ then
$\tau(x_1)$ and $\tau(x_2)$ coincide on $\inte_M(F_j)$.
It follows that
\begin{equation}
\label{e:bound-tau-Y-int}
|\pi_{\inte_M(F_j)}(Y)| \leq |\pi_{F_j}(X)|.
\end{equation}
On the other hand, as $F_j$ is the disjoint union of $\inte_M(F_j)$ and $\partial_M(F_j)$, we have
$$
\pi_{F_j}(Y) \subset \pi_{\inte_M(F_j)}(Y) \times A^{\partial_M(F_j)}
$$
and hence
\[
\log |\pi_{F_j}(Y)| \leq
\log |\pi_{\inte_M(F_j)}(Y)| + |\partial_M(F_j)| \log |A|.  
\]
After dividing by $|F_j|$, this gives us
\begin{align*}
\frac{\log \vert \pi_{F_j}(Y) \vert}{\vert F_j \vert}
&\leq \frac{\log \vert \pi_{\inte_M(F_j)}(Y) \vert}{\vert F_j \vert} + \alpha(F_j,M) \log |A| \\
&\leq \frac{\log \vert \pi_{F_j}(X) \vert}{\vert F_j \vert} + \alpha(F_j,M) \log \vert A \vert 
&& \text{(by \eqref{e:bound-tau-Y-int}).}
 \end{align*}
  Since $\lim_j \alpha(F_j,M) = 0$ by Proposition \ref{p:Folner-boundaries},
 we finally get
 $$
\ent_\FF(Y) = \limsup_j \frac{\log \vert \pi_{F_j}(Y) \vert}{\vert F_j \vert}
\leq \limsup_j \frac{\log \vert \pi_{F_j}(X) \vert}{\vert F_j \vert} = \ent_\FF(X).
\vspace*{-6pt}
$$
 \end{proof}

The following result may be used to show that certain sets of configurations
do not have maximal entropy.
  
\begin{proposition}
\label{p:ent-strict-inf}
Let $S$ be a left-cancellative and left-amenable semigroup,   
$\FF = (F_j)_{j \in J}$ a F\o lner net for $S$, and $A$ a finite set. 
Suppose that a subset $X \subset A^S$ satisfies the following condition:
there exist a non-empty finite subset $K \subset S$ and a $K$-tiling $T \subset S$ of $S$ such that
$\pi_{Kt}(X) \subsetneqq A^{Kt}$ for all $t \in T$. 
Then one has $\ent_\FF(X) < \log\vert A \vert$.
\end{proposition}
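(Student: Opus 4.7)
The plan is to exploit the $K$-tiling $T$ to cover a positive fraction of each F\o lner set $F_j$ by the disjoint translates $Kt$, on each of which the hypothesis forces $X$ to miss at least one pattern. The resulting multiplicative loss, compounded over linearly many tiles, will beat the trivial upper bound $|A|^{|F_j|}$ by a fixed exponential factor, and after dividing by $|F_j|$ this will translate into an entropy defect bounded away from $0$.

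First I would reintroduce $T_j := \{t \in T : Kt \subset F_j\}$ as in Proposition~\ref{p:maj--Nj-su-Fj}, obtaining a constant $\delta > 0$ and an index $j_0 \in J$ such that $|T_j| \geq \delta |F_j|$ for all $j \geq j_0$. By (T-1) the sets $Kt$ with $t \in T_j$ are pairwise disjoint, and by construction they are all contained in $F_j$; setting $R_j := F_j \setminus \bigcup_{t \in T_j} Kt$ therefore gives the disjoint decomposition $F_j = R_j \sqcup \bigsqcup_{t \in T_j} Kt$, so in particular $|R_j| + \sum_{t \in T_j}|Kt| = |F_j|$.

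Next, since the restriction map $\pi_{F_j}(X) \to \pi_{R_j}(X) \times \prod_{t \in T_j}\pi_{Kt}(X)$ sending $x|_{F_j}$ to the tuple of its subrestrictions is injective, I obtain
\[
|\pi_{F_j}(X)| \; \leq \; |A|^{|R_j|} \prod_{t \in T_j}|\pi_{Kt}(X)|.
\]
The standing hypothesis $\pi_{Kt}(X) \subsetneqq A^{Kt}$ gives $|\pi_{Kt}(X)| \leq |A|^{|Kt|} - 1$; and since $|Kt| \leq |K|$, the quantity $1 - |A|^{-|Kt|}$ is bounded above by $c := 1 - |A|^{-|K|}$, which lies strictly in $(0,1)$ (we may assume $|A| \geq 2$, as the case $|A|=1$ forces $X = \varnothing$ and is vacuous). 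Factoring out $|A|^{|Kt|}$ from each term then yields
\[
|\pi_{F_j}(X)| \; \leq \; |A|^{|R_j|} \prod_{t \in T_j} |A|^{|Kt|}\bigl(1 - |A|^{-|Kt|}\bigr) \; \leq \; |A|^{|F_j|}\, c^{|T_j|}.
\]

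Finally, taking logarithms, dividing by $|F_j|$, and using $|T_j|/|F_j| \geq \delta$ for $j \geq j_0$, I get
\[
\frac{\log|\pi_{F_j}(X)|}{|F_j|} \; \leq \; \log|A| + \frac{|T_j|}{|F_j|}\log c \; \leq \; \log|A| + \delta \log c
\]
for all $j \geq j_0$, where the inequality is preserved in the right direction because $\log c < 0$. Passing to $\limsup_j$ then produces $\ent_\FF(X) \leq \log|A| + \delta \log c < \log|A|$, as required. There is no real obstacle: the only place where care is needed is the uniform control $|Kt| \leq |K|$ that lets $c$ be chosen independently of $t$, and the tiling bound from Proposition~\ref{p:maj--Nj-su-Fj} that converts the per-tile loss into a linear-in-$|F_j|$ gain in the exponent.
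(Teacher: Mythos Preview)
Your proof is correct and follows essentially the same approach as the paper's: decompose $F_j$ into the disjoint tiles $Kt$ for $t \in T_j$ and a remainder, bound $|\pi_{F_j}(X)|$ by the product of $|A|^{|R_j|}$ and the $|\pi_{Kt}(X)| \leq |A|^{|Kt|}-1$, use $|Kt|\leq|K|$ to make the loss-per-tile uniform, and invoke Proposition~\ref{p:maj--Nj-su-Fj} for the linear lower bound on $|T_j|$. The only differences are notational (you write $R_j$ for the paper's $F_j^*$, and your $c = 1-|A|^{-|K|}$ is the exponential of minus the paper's $c$), and you handle the degenerate case $|A|\leq 1$ explicitly whereas the paper leaves it implicit.
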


\begin{proof}
For each $j \in J$, consider   
the subset $T_j \subset T$ defined by
$T_j :=   \{t \in T : Kt \subset F_j\}$ and the subset $F_j^* \subset F_j$ given by
$$
F_j^* = F_j \setminus \coprod_{t \in T_j} Kt.
$$
As the sets $F_j^*$ and $Kt$, $t \in T_j$, form a partition of $F_j$, we have
\begin{equation}
\label{e:card-Fj-F_jstar-Kt}
\vert F_j \vert = \vert F_j^* \vert + \sum_{t \in T_j} \vert Kt \vert.
\end{equation}
On the other hand, by our hipothesis, we have
\begin{equation}
\label{e:maj-pi-X-Nj}
\vert \pi_{Kt}(X) \vert \leq \vert A^{Kt} \vert - 1 = \vert A \vert^{\vert Kt \vert} - 1
\quad \text{for all } t \in T.
\end{equation}
As
$$
\pi_{F_j}(X) \subset A^{F_j^*} \times \prod_{t \in T_j} \pi_{Kt}(X),
$$
it follows that
\begin{align*}
\log \vert \pi_{F_j}(X) \vert 
&\leq \log \vert A^{F_j^*} \times \prod_{t\in T_j} \pi_{Kt}(X) \vert \\
&= \vert F_j^* \vert \log \vert A \vert + \sum_{t \in T_j} \log \vert \pi_{Kt}(X) \vert \\
&\leq \vert F_j^* \vert \log \vert A \vert + \sum_{t \in T_j} \log(\vert A \vert^{\vert Kt \vert} - 1) \ \ \text{(by \eqref{e:maj-pi-X-Nj})}\\
&= \vert F_j^* \vert \log \vert A \vert + \sum_{t \in T_j} \vert Kt \vert \log\vert A \vert
+ \sum_{t \in T_j} \log(1 - \vert A \vert^{-\vert Kt \vert} ) \\
&= |F_j| \log |A|
+ \sum_{t \in T_j} \log(1 - \vert A \vert^{-\vert Kt \vert})  &&\text{(by \eqref{e:card-Fj-F_jstar-Kt})}\\
&\leq  \vert F_j \vert \log\vert A \vert
+ \vert T_j \vert \log(1 - \vert A \vert^{-\vert K \vert})\\
  &\text{(since $|Kt| \leq |K|$ for all $t \in T_j$).} 
\end{align*}

By introducing the constant  $c := - \log(1 - \vert A \vert^{- \vert K \vert}) > 0$,   this gives us
$$
\log\vert \pi_{F_j}(X) \vert \leq \vert F_j \vert \log\vert A \vert - c \vert T_j \vert \quad 
\text{ for all } j \in J.
$$
Now, by Proposition \ref{p:maj--Nj-su-Fj}, there exist $\delta > 0$ and $j_0 \in J$ such that $\vert T_j \vert \geq \delta
\vert F_j \vert$ for all $j \geq j_0$. Thus
$$
\frac{\log\vert \pi_{F_j}(X) \vert}{\vert F_j \vert} \leq \log\vert A \vert - c\delta \quad \text{ for all } j \geq j_0.
$$
This implies that
$$
\ent_\FF(X) = \limsup_j \frac{\log\vert \pi_{F_j}(X) \vert}{\vert F_j \vert} \leq \log\vert A \vert - c\delta < \log\vert A \vert.
$$
\end{proof}

Recall that if $E$ is a set equipped with a left action of a semigroup $S$, one says that a subset $X \subset E$ is $S$-\emph{invariant} if one has $sx \in X$ for all $s \in S$ and $x \in X$.

\begin{corollary}
\label{c:entropy-invariant-not-max}
Let $S$ be a cancellative and left-amenable semigroup,   
$\FF = (F_j)_{j \in J}$ a F\o lner net for $S$, and $A$ a finite set. 
Suppose that $X \subset A^S$ is an $S$-invariant subset satisfying the following condition:
there exists a  finite subset $K \subset S$ such that
\begin{equation}
\label{e:proj-on-K-not-onto}
\pi_{K}(X) \subsetneqq A^{K}. 
\end{equation}
Then one has $\ent_\FF(X) < \log\vert A \vert$.
\end{corollary}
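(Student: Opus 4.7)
The plan is to derive this corollary from Proposition~\ref{p:ent-strict-inf} by producing, for the given finite $K$, a $K$-tiling $T$ of $S$ such that $\pi_{Kt}(X) \subsetneqq A^{Kt}$ holds for every $t \in T$. We may assume $K$ is non-empty (if $K = \varnothing$ then hypothesis~\eqref{e:proj-on-K-not-onto} forces $X = \varnothing$ and the conclusion is immediate). Proposition~\ref{p:tilings-exist} then supplies a $K$-tiling $T \subset S$, so the only non-trivial input will be the ``forbidden pattern'' condition at each tile $Kt$.

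To verify that condition, fix a pattern $p \in A^K$ with $p \notin \pi_K(X)$, whose existence is guaranteed by \eqref{e:proj-on-K-not-onto}. For each $t \in T$, the right-cancellability of $t$ (which we have because $S$ is cancellative) makes the map $k \mapsto kt$ a bijection from $K$ onto $Kt$, so there is a well-defined pattern $\tilde{p}_t \in A^{Kt}$ characterized by $\tilde{p}_t(kt) = p(k)$ for all $k \in K$. I claim $\tilde{p}_t \notin \pi_{Kt}(X)$: if some $x \in X$ satisfied $x\vert_{Kt} = \tilde{p}_t$, then the translated configuration $tx$ would satisfy $(tx)(k) = x(kt) = \tilde{p}_t(kt) = p(k)$ for every $k \in K$, so $\pi_K(tx) = p$; but $tx \in X$ by $S$-invariance, contradicting the choice of $p$.

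With the hypotheses of Proposition~\ref{p:ent-strict-inf} now in hand for this $K$ and this tiling $T$, a direct application of that proposition yields $\ent_\FF(X) < \log \vert A \vert$. There is no real obstacle here; the content is simply a clean combination of the two structural assumptions on $S$ and $X$: the $S$-invariance of $X$ is what propagates one forbidden pattern on $K$ to every translate $Kt$, and the right-cancellativity of elements of $T$ is what makes ``the translate of the pattern'' unambiguously defined on $Kt$. This explains, in passing, why the corollary requires full cancellativity of $S$ rather than merely left-cancellativity.
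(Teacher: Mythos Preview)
Your proof is correct and follows essentially the same route as the paper's: fix a forbidden pattern on $K$, use right-cancellability to transport it to each $Kt$, use $S$-invariance of $X$ to see it remains forbidden there, and then invoke Proposition~\ref{p:ent-strict-inf} together with Proposition~\ref{p:tilings-exist}. The only cosmetic differences are that the paper states the translated-pattern claim for all $s\in S$ (not just $t\in T$), and that you explicitly dispose of the case $K=\varnothing$, which the paper leaves implicit.
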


\begin{proof}
First observe that
\begin{equation}
\label{e:proj-on-Ks-not-onto}
\pi_{Ks}(X) \subsetneqq A^{Ks} \quad \text{for all  } s \in S. 
\end{equation}
Indeed, by \eqref{e:proj-on-K-not-onto}, we can find $u \in A^K$ such that $u \notin \pi_{K}(X)$.
Then, given $s \in S$, we can define $v \in A^{Ks}$ by setting $v(ts) = u(t)$ for all $t \in K$ (the right-cancellability of $s$ implies that $v$ is well defined).
Now, there is no $x \in X$ such that $\pi_{Ks}(x) = v$ since otherwise the configuration $sx$, which is in $X$ by our $S$-invariance hipothesis, would satisfy $\pi_{K}(sx) = u$. This proves \eqref{e:proj-on-Ks-not-onto}. 
\par
Since we can find a $K$-tiling of $S$ by Proposition~\ref{p:tilings-exist}, 
we deduce from \eqref{e:proj-on-Ks-not-onto} that $\ent_\FF(X) < \log\vert A \vert$ by applying Proposition~\ref{p:ent-strict-inf}. 
\end{proof}

\begin{corollary}
\label{c:entropy-invariant-closed-max}
Let $S$ be a cancellative and left-amenable semigroup,   
$\FF = (F_j)_{j \in J}$ a F\o lner net for $S$, and $A$ a finite set. 
Suppose that $X \subset A^S$ is a closed (for the prodiscrete topology) 
and $S$-invariant subset of $A^S$.
Then one has $\ent_\FF(X) = \log |A|$ if and only if $X = A^G$.
\end{corollary}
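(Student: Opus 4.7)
The plan is to reduce the statement to Corollary \ref{c:entropy-invariant-not-max}, which already encapsulates the difficult estimate. One implication is essentially trivial: if $X = A^S$, then $\pi_{F_j}(X) = A^{F_j}$ for every $j$, so $\log|\pi_{F_j}(X)|/|F_j| = \log|A|$ for each $j$, and therefore $\ent_\FF(X) = \log|A|$.

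For the converse, I would argue by contrapositive: suppose $X \subsetneq A^S$ and show that $\ent_\FF(X) < \log|A|$. Pick any $y \in A^S \setminus X$. Since $X$ is closed with respect to the prodiscrete topology, the complement of $X$ is a neighborhood of $y$, so by definition of the product topology there exists a non-empty finite subset $K \subset S$ with the property that every configuration $z \in A^S$ agreeing with $y$ on $K$ lies outside $X$. In particular $y\vert_K \in A^K$ is not in $\pi_K(X)$, which shows
\[
\pi_K(X) \subsetneqq A^K.
\]
This is precisely the hypothesis \eqref{e:proj-on-K-not-onto} of Corollary \ref{c:entropy-invariant-not-max}, and since $S$ is cancellative and left-amenable, $A$ is finite, and $X$ is $S$-invariant, that corollary applies directly and yields $\ent_\FF(X) < \log|A|$, completing the proof.

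There is no real obstacle here: the whole content lies in Proposition \ref{p:ent-strict-inf} and Corollary \ref{c:entropy-invariant-not-max}, which are already proved. The only thing to verify is that closedness translates the strict inclusion $X \subsetneq A^S$ into the existence of a finite \emph{forbidden pattern}, and this is standard for the prodiscrete topology. Once that observation is made, the argument is a one-line invocation of the earlier corollary.
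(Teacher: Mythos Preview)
Your proof is correct and follows essentially the same route as the paper's own proof: both handle the forward implication trivially and, for the converse, use closedness of $X$ to extract a finite subset $K\subset S$ with $\pi_K(X)\subsetneqq A^K$, then apply Corollary~\ref{c:entropy-invariant-not-max}. You merely spell out in more detail why closedness in the prodiscrete topology yields such a finite forbidden pattern.
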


\begin{proof}
The fact that $\ent_\FF(A^G) = \log |A|$ has already been observed and is trivial.
Conversely, suppose that $X \subsetneqq A^G$.
As $X$ is closed in $A^S$, this means that we can find a subset $K \subset S$ such that  
$\pi_{K}(X) \subsetneqq A^{K}$.
It then follows from Corollary \ref{c:entropy-invariant-not-max} that $\ent_\FF(X) < \log |A|$.
\end{proof}
 
\section{Entropy and cellular automata}
\label{s:proof} 

In this section, we give the proof of Theorem \ref{t:myhill-semigroup}.
We shall use the following auxiliary result.

\begin{lemma}
\label{l:preinj-implique-ent-max}
Let $S$ be a left-cancellative and left-amenable semigroup,   
$\FF = (F_j)_{j \in J}$ a F\o lner net for $S$, and $A$ a finite set. 
 Suppose that $\tau \colon A^S \to A^S$ is a cellular automaton such that
\begin{equation} \label{e:ent-imae-non-max}
\ent_{\FF}(\tau(A^S)) < \log\vert A \vert.
\end{equation}
Then $\tau$ is not pre-injective.
\end{lemma}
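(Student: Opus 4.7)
The plan is to argue by contraposition: suppose $\tau$ is pre-injective; I will show that $\ent_\FF(\tau(A^S)) \geq \log|A|$, which together with the trivial upper bound forces equality and contradicts \eqref{e:ent-imae-non-max}. Fix a memory set $M \subset S$ for $\tau$.

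The core is a Myhill-style counting argument. For each $j \in J$, I would fix an arbitrary reference configuration $x_0 \in A^S$ and consider the set
$$
V_j := \{x \in A^S : x \text{ coincides with } x_0 \text{ on } S \setminus F_j\},
$$
which has cardinality $|A|^{|F_j|}$. Any two elements of $V_j$ differ only on the finite set $F_j$, so they are almost equal and pre-injectivity forces $\tau|_{V_j}$ to be injective. By Proposition~\ref{p:tau-int-and-ext}(iii), every configuration in $\tau(V_j)$ agrees with $\tau(x_0)$ on $S \setminus \adhe_M(F_j)$, so restriction to $\adhe_M(F_j)$ remains injective on $\tau(V_j)$, yielding
$$
|\pi_{\adhe_M(F_j)}(\tau(A^S))| \geq |\pi_{\adhe_M(F_j)}(\tau(V_j))| = |A|^{|F_j|}.
$$

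Next I would replace $\adhe_M(F_j)$ by $F_j$ in the projection. Since $\inte_M(F_j) \subset F_j$, we have $\adhe_M(F_j) \setminus F_j \subset \adhe_M(F_j) \setminus \inte_M(F_j) = \partial_M^*(F_j)$, and a standard coordinate-splitting estimate (applied with $\adhe_M(F_j) \cap F_j \subset \adhe_M(F_j)$) gives
$$
|\pi_{\adhe_M(F_j)}(\tau(A^S))| \leq |\pi_{F_j}(\tau(A^S))| \cdot |A|^{|\partial_M^*(F_j)|}.
$$
Combining the two displays yields $|\pi_{F_j}(\tau(A^S))| \geq |A|^{|F_j| - |\partial_M^*(F_j)|}$. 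Taking logarithms, dividing by $|F_j|$, and invoking Proposition~\ref{p:Folner-boundaries} to push $\alpha^*(F_j, M)$ to zero along the net, I arrive at $\ent_\FF(\tau(A^S)) \geq \log|A|$, the required contradiction.

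No serious obstacle is apparent: the argument simply packages the bookkeeping of Proposition~\ref{p:tau-int-and-ext}(iii) together with the boundary estimate of Proposition~\ref{p:Folner-boundaries}. The one subtlety I would flag is that $S$ is not required to have a unit, so $F_j \subset \adhe_M(F_j)$ cannot be taken for granted; this is precisely why the comparison between $\pi_{F_j}$ and $\pi_{\adhe_M(F_j)}$ must be routed through the \emph{starred} boundary $\partial_M^*(F_j)$ rather than through a direct difference of cardinalities of $\adhe_M(F_j)$ and $F_j$.
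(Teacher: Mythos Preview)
Your argument is correct and is essentially the contrapositive of the paper's own proof: the paper fixes a single index $j_0$ at which the entropy bound fails, considers the same set of configurations constant outside $F_{j_0}$, and applies pigeonhole to exhibit two almost-equal configurations with the same image, using exactly the ingredients you use (Proposition~\ref{p:tau-int-and-ext}(iii), the decomposition $\adhe_M(F_j) = \inte_M(F_j) \sqcup \partial_M^*(F_j)$ with $\inte_M(F_j) \subset F_j$, and Proposition~\ref{p:Folner-boundaries}). The only cosmetic difference is that you carry the estimate for all $j$ and pass to the limit, whereas the paper localizes to one $j_0$; the content is identical.
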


\begin{proof}
Let $Y := \tau(A^S)$ denote the image of $\tau$ and let $M \subset S$ be a memory set for $\tau$.
 Recall that $\adhe_M(F_{j})$ is the disjoint union of
$\inte_M(F_j)$ and $\partial^*_{M}(F_j)$. Therefore we have 
\[
\pi_{\adhe_M(F_{j})}(Y) \subset \pi_{\inte_M(F_j)}(Y) \times A^{\partial^*_{M}(F_j)}.
\]
This implies
 \begin{align*}
\log \vert \pi_{\adhe_M(F_{j})}(Y) \vert &\leq \log\vert \pi_{\inte_M(F_j)}(Y) \vert + \vert \partial^*_{M}(F_j)\vert
\log \vert A \vert \\
&\leq \log\vert \pi_{F_j}(Y) \vert + \vert \partial^*_{M}(F_j) \vert \log \vert A \vert && \text{(since $\inte_M(F_j) \subset F_j$)}. 
\end{align*}
After dividing by $|F_j|$, we get
\begin{equation}
\label{e:maj-pi-j-pro-Y}
\frac{\log \vert \pi_{\adhe_M(F_{j})}(Y) \vert }{\vert F_j \vert} \leq \frac{\log\vert \pi_{F_j}(Y) \vert}{\vert F_j \vert} +
\alpha^*(F_j,M)
\log\vert A \vert
\end{equation}
for all $j \in J$.
As
$$
  \limsup_j \frac{\log\vert \pi_{F_j}(Y) \vert}{\vert F_j \vert} = \ent_\FF(Y) < \log \vert A \vert
$$
by our hipothesis, and
$$
\lim_j \alpha^*(F_j,M) = 0
$$
by Proposition \ref{p:Folner-boundaries}, we deduce from inequality \eqref{e:maj-pi-j-pro-Y} that
$$
\limsup_j \frac{\log \vert \pi_{\adhe_M(F_{j})}(Y) \vert }{\vert F_{j} \vert} < \log \vert A \vert. 
$$ 
Consequently, there exists $j_0 \in J$ such that
\begin{equation} 
\label{e:maj-pi-Y-j0}
\frac{\log \vert \pi_{\adhe_M(F_{j_0})}(Y) \vert }{\vert F_{j_0} \vert} < \log \vert A \vert.
\end{equation}
Now let us fix an arbitrary element $a_0 \in A$ and consider  the set  $Z \subset A^S$ consisting of all the  configurations $z \in A^S$ such that $z(s) = a_0$ for all $s \in S \setminus F_{j_0}$.
Note that the set $Z$ is finite of cardinality
\[
|Z| = |A|^{|F_{j_0}|}.
\]
 Inequality~\eqref{e:maj-pi-Y-j0} gives us
\begin{equation}
\label{e:minore-card-Z}
\vert \pi_{\adhe_M(F_{j_0})}(Y) \vert <   \vert Z \vert.
\end{equation}
Observe that if $z_1,z_2 \in Z$, then $z_1$ and $z_2$ coincide outside $F_{j_0}$ so that the image configurations  $\tau(z_1)$ and $\tau(z_2)$ coincide outside $\adhe_M(F_{j_0})$  by Proposition \ref{p:tau-int-and-ext}.(iii). 
Thus we have
$$
\vert \tau(Z) \vert = \vert \pi_{\adhe_M(F_{j_0})}(\tau(Z)) \vert \leq \vert \pi_{\adhe_M(F_{j_0})}(Y) \vert
$$
and hence, by using \ref{e:minore-card-Z}, 
$$
|\tau(Z)| < \vert Z \vert.
$$
This last inequality implies that we can find two distinct configurations $z_1$ and $z_2$ in $Z$ such that $\tau(z_1) =  \tau(z_2)$.
Since $z_1$ and $z_2$ are almost equal (they coincide outside the finite set  
$F_{j_0}$), this shows that $\tau$ is not pre-injective.
\end{proof}

\begin{proof}[Proof of Theorem \ref{t:myhill-semigroup}]
Let   $\tau \colon A^S \to A^S$ be a   cellular automaton and suppose that $\tau$ is pre-injective.
 Choose a F\o lner net $\FF = (F_j)_{j \in J}$ for $S$.
 then the  image set $Y := \tau(X)$
 satisfies $\ent_\FF(Y) = \log |A|$ by Lemma \ref{l:preinj-implique-ent-max}.
On the other hand,   as $A^S$ is a compact Hausdorff space and $\tau$ is continuous for the prodiscrete topology by Proposition~\ref{p:ca-continuous},
the  set $Y $ is closed in $A^S$.
 Since $Y$ is $S$-invariant by Proposition~\ref{p:ca-equivariant},
it then follows from Corollary~\ref{c:entropy-invariant-closed-max} that $Y = A^S$.
This shows that $\tau$ is surjective. 
\end{proof}

\begin{remark}
\label{r:commutative}
Recall that a semigroup $S$ is said to be \emph{left-reversible} if any two left-principal ideals in $S$ intersect, i.e., $aS \cap bS \not= \varnothing$ for all $a,b \in S$.
As every left-amenable semigroup is clearly left-reversible,
one deduces from Ore's theorem that if $S$ is a cancellative left-amenable semigroup, then $S$ embeds in an amenable group, its group of left-quotients $G := \{st^{-1} : s,t \in S\}$
(see \cite[Corollary 3.6]{wilde-witz}).
When $S$ is a cancellative commutative semigroup, e.g., $S = \N$ for which $G = \Z$,
given any finite subset $F \subset G$, we can always find $t \in S$ such that $t + F \subset S$
(if $F = \{s_i - t_i  : s_i,t_i \in S, 1 \leq i \leq n\}$, we can take $t = \sum_{1 \leq i \leq n} t_i$).
It follows that  the Myhill property for cellular automata over $S$ may be easily deduced from the Myhilll property for cellular automata over $G$
in that particular case.    
Indeed, suppose that $\tau \colon A^S \to A^S$ is a cellular automaton with memory set $M \subset S$ and local defining map
$\mu \colon A^M \to A$.
Consider the cellular automaton $\sigma \colon A^G \to A^G$ that admits $M$ as a memory set and $\mu$ as a local defining map.
If two configurations $x_1,x_2 \in A^G$ coincide outside $F$, then
their shifts by $-t$ coincide outside $t + F \subset S$.
We deduce that the pre-injectivity of $\tau$ implies the pre-injectivity of $\sigma$.
As the surjectivity of $\sigma$ clearly implies the surjectivity of $\tau$, this proves our claim.   
\end{remark}

\section{Some examples of cellular automata}
\label{sec:examples}
 
\begin{example}[Surjective but not pre-injective cellular automata]
\label{ex:surjective-not-preinj}
Let $S$ be a semigroup admitting a left-cancellable element $s_0$ such that $s_0 S \not= S$
(i.e., an element $s_0$ such that the left-multiplication map $L_{s_0} \colon S \to S$ is injective but not surjective). 
 Take $A = \{0,1\}$,
and consider the map $\tau \colon A^S \to A^S$
defined by $\tau(x)(s) = x(s_0 s)$ for all $x \in A^S$ and $s \in S$.
Clearly $\tau$ is a cellular automaton over the semigroup $S$ admitting $M = \{s_0\}$ as a 
memory set.
Let $x_0 \in A^S$ be the configuration defined by $x_0(s) = 0$ for all $s \in S$.
Choose an arbitrary element $s_1 \in S \setminus s_0 S$ and  
let $x_1 \in A^S$ be the configuration defined by  $x_1(s) = 0$ for all $s \not=  s_1$ and $x_1(s_1) = 1$.
Then we have $x_0 \not= x_1$ but $\tau(x_0) = \tau(x_1) = x_0$.
As the configurations $x_0$ and $x_1$ are almost equal, this shows that $\tau$ is not pre-injective. 
On the other hand, let $y \in A^S$.
Consider the configuration $x \in A^S$ defined by $x(s) = 0$ if $s \notin s_0 S$ and
$x(s) = y(t)$ if $s = s_0 t$ for some $t \in S$
(the left-cancellability of $s_0$ guarantees that $x$ is well defined). 
Then we  have $\tau(x) = y$.
Consequently, $\tau$ is surjective.
Any free semigroup, any free monoid, any free commutative semigroup, and any free commutative monoid satisfies our hipothesis on $S$ as soon as it is non-trivial.
Note that all free commutative semigroups and all free commutative monoids are cancellative and amenable.
\end{example}

\begin{example}[Non-surjunctivity of the bicyclic monoid]
We recall that the \emph{bicyclic monoid} is the monoid $B$ with presentation $B = \langle p,q : pq = 1\rangle$ and
that every element $s \in B$ can be uniquely written in the form
$s = q^a p^b$, where $a = a(s)$ and $b = b(s)$ are  non-negative integers.
It is known (see for example \cite[Example 2, page 311]{duncan-namioka}) that the bicyclic monoid is an amenable inverse semigroup.
\par
 Take $A = \{0,1\}$,
and consider the map $\tau \colon A^B \to A^B$
defined by $\tau(x)(s) = x(p s)$ for all $x \in A^B$ and $s \in B$.
Clearly $\tau$ is a cellular automaton over  $B$ admitting $M = \{p\}$ as a 
memory set.
Observe that $\tau$ is not surjective since $1_B \not= qp$ and $\tau(x)(1_B) = \tau(x)(qp)$ for all $x \in A^B$.
On the other hand, $\tau$ is injective since $pB = B$.
\end{example}

\begin{remark}
It would be interesting to give an example of a cancellative semigroup that is not surjunctive.
\end{remark}

\noindent
\emph{Acknowledgments.} We  are grateful to the referee for helpful suggestions.\\

\noindent
\emph{Note added in proof.} We thank Laurent Bartholdi who pointed out to us that our argument in Remark \ref{r:commutative} can be extended to all cancellative left-amenable semigroups, thus yielding an alternative proof of Theorem \ref{t:myhill-semigroup}. Suppose indeed that $S$ is a cancellative left-amenable semigroup and denote by $G := \{st^{-1} : s,t \in S\}$ the amenable group of its left-quotients. Let $F = \{s_it_i^{-1}: s_i, t_i \in S, 1 \leq i \leq n\} \subset G$ be a finite subset. Since $S$ is left-reversible, we have $\bigcap_{i=1}^n t_iS \neq \varnothing$.
Taking $t \in \bigcap_{i=1}^n t_iS$ gives $Ft \subset S$ and the remaining arguments (based on the Myhilll property for cellular automata over $G$) in Remark \ref{r:commutative}
apply verbatim.


\end{document}